\newcommand{\gm}{\gamma}
\newcommand{\R}{\mathbb{R}}
\newcommand{\eps}{\varepsilon}
\newcommand{\Proof}{\begin{proof}}
\newcommand{\End}{\end{proof}}
\newtheorem{lemma}{Lemma}[section]
\newtheorem{theorem}{Theorem}[section]
\newtheorem{definition}{Definition}[section]
\newtheorem{proposition}{Proposition}[section]
\newtheorem{remark}{Remark}[section]
\newtheorem{example}{Example}[section]
\numberwithin{equation}{section}
\begin{document}

\title[EXPONENTIAL CONVERGENCE OF THE 1-GRAPH]{EXPONENTIAL CONVERGENCE OF 1-GRAPH OF THE SOLUTION SEMIGROUP OF CONTACT HAMILTON-JACOBI EQUATIONS}

\author[L. Jin]{Liang Jin}
\address[L. Jin]{Academy of Mathematics and Systems Science, CAS,
Beijing 100190, China}
\email{jinliang@amss.ac.cn}

\author[L. Wang]{Lin Wang}
\address[L. Wang]{Yau Mathematical Sciences Center, Tsinghua University, Beijing 100084, China}
\email{lwang@math.tsinghua.edu.cn}

\date{\today}

\begin{abstract}
  Under certain assumptions, we show that for the solution semigroup of evolutionary contact Hamilton-Jacobi equations, its 1-graph, as a pseudo Legendrian graph, converges {\it exponentially} to the 1-graph of the viscosity solution of stationary equations in the sense of certain Hausdorff metrics. This result reveals an essential difference between certain dissipative systems and conservative systems from weak KAM aspects.
\end{abstract}

\keywords{contact Hamilton-Jacobi equation, solution semigroup, 1-graph, exponential convergence}
\subjclass[2010]{35D40, 35F21,  37J50}

\maketitle
\thispagestyle{empty}
\tableofcontents

\section{Introduction}
Let $M$ be a  connected, closed $C^{\infty}$ Riemannian manifold, we consider a $C^r (r\geq3)$ function $H:T^{*}M\times\R\rightarrow\R$ called a contact Hamiltonian that satisfies
\begin{itemize}
  \item [\textbf{(H1)}] \textbf{Positive Definiteness}. For every $(x,u,p)\in T^{\ast}M\times\R$, the second partial derivative $\partial^2 H/\partial p^2(x,u,p)$ is positive definite as a quadratic form,
  \item [\textbf{(H2)}] \textbf{Superlinear Growth}. For every $(x,u)\in M\times\R$, $H(x,u,p)$ is superlinear with respect to $p$, that is
                        \begin{equation*}
                        \lim_{|p|_{x}\rightarrow\infty}\frac{H(x,u,p)}{|p|_{x}}\rightarrow\infty,
                        \end{equation*}
                        where $|\cdot|_{x}$ denotes the norm on $T_{x}^{\ast}M$ induced by the Riemannian metric,
  \item [\textbf{(H3)}] \textbf{Moderate increasing}. There is constant $\Lambda>0$ such that for every $(x,u,p)\in T^{\ast}M\times\R$,
                        \begin{equation*}
                        0<\frac{\partial H}{\partial u}(x,u,p)\leq\Lambda.
                        \end{equation*}
\end{itemize}
We focus on the following two first order partial differential equations  associated to $H$, namely
\begin{equation}\label{evo}
\begin{cases}
\partial_tu(t,x)+H(x,u(t,x),\partial_xu(t,x))=0,\hspace{0.3cm}(t,x)\in[0,+\infty)\times M,\\
u(0,x)=\varphi(x),
\end{cases}
\end{equation}
and
\begin{equation}\label{sta}
H(x,u(x),d_{x}u(x))=0,\hspace{0.3cm}x\in M.
\end{equation}

If $\frac{\partial H}{\partial u}\equiv0$, (\ref{evo}) and (\ref{sta}) are reduced to classical Hamilton-Jacobi equations. In this case, there are a broad class of works on the convergence of viscosity solutions of evolutionary equation (\ref{evo}) to viscosity solutions of stationary equation (\ref{sta}) from both dynamical and PDE approaches, see \cite{ds,F22,F3,FaM,II,NR,WY} and references therein. Besides the convergence of the viscosity solution itself, M. Arnaud found that such convergence can be viewed geometrically as the convergence of the differential of the Lax-Oleinik semigroup as a pseudo Lagrangian graph in the sense of Hausdorff metric \cite{Ana}.

If $\frac{\partial H}{\partial u}\not\equiv 0$, (\ref{evo}) and (\ref{sta}) are called contact Hamilton-Jacobi equations. From the view of physics, these equations (\ref{evo}) and (\ref{sta}) appear naturally in contact Hamiltonian mechanics \cite{BCT1}, which is the most natural extension of Hamiltonian mechanics \cite{Ar1,Ar}. In a recent work \cite{SWY}, X. Su, L. Wang and J. Yan showed that under the assumptions (H1)-(H3), there exists an interval $\mathcal{C}_{H}\subseteq\mathbb{R}$ only depending on $H$ (see Appendix B) such that if $0\in\mathcal{C}_{H}$, then for every $\varphi(x)\in C^0(M,\R)$, the unique viscosity solution $u(t,x):=T_{t}\varphi(x)$ of \eqref{evo}  converges to a viscosity solution $u_{-}(x)$ of \eqref{sta} in the $C^0$-norm as $t$ goes to infinity, i.e.,
\begin{equation}\label{c}
\|T_{t}\varphi-u_{-}\|_{C^{0}}\rightarrow0\,\,\,\,\text{as}\,\,\,t\rightarrow\infty,
\end{equation}
where $T_{t}$ is referred as a generalized Lax-Oleinik semigroup, named {\it solution semigroup}, see (\ref{uxt}) below. More recently, a similar convergence result was obtained by X. Li \cite{Li} using generalized dynamical and PDE techniques.

It is well known that assumptions (H1)-(H3) implies for $t>0$, $T_{t}\varphi$ and $u_{-}$ are Lipschitz functions on $M$. Given a Lipschitz function $u:M\rightarrow\mathbb{R}$, let $\mathcal{D}_{u}$ denotes the set of differentiable point of $u$, a compact subset of $T^{\ast}M\times\mathbb{R}$ called 1-graph of $u$ is defined as
\begin{equation}\label{def-j}
\bar{\mathcal{J}}^{1}_{u}:=\overline{\{(x,u(x),d_{x}u(x)):x\in\mathcal{D}_{u}\}}.
\end{equation}
Geometrically, $\bar{\mathcal{J}}^{1}_{u}$ can be considered as a pseudo Legendrian graph over $M$.  Let us recall
\begin{definition}\label{hd}
Let $(X,d)$ be a metric space and $\mathcal{K}(X)$ be the set of non-empty compact subset of $X$. The Hausdorff metric $d_{H}$ induced by $d$ is defined by
\begin{equation}
d_{H}(K_{1},K_{2})=\max\bigg\{\max_{x\in K_{1}}d(x,K_{2}),\max_{x\in K_{2}}d(x,K_{1})\bigg\},\,\,\,\forall K_{1},K_{2}\in\mathcal{K}(X).
\end{equation}
\end{definition}

In this note, we will study the rate of convergence formed as  \eqref{c} from a more quantitative and geometrical point of view.   The characteristics of (\ref{evo}) is formulated as contact Hamilton equations (see (\ref{hjech}) below). It follows from (H3) that the energy along the local contact flow is dissipative. Comparably, regarding the rate of convergence of Lax-Oleinik operators for classical Hamilton-Jacobi equations, due to the energy conservation, the exponential convergence of evolutionary solution for classical Hamiltonian requires delicate dynamical conditions, it could not be achieved even if the Aubry set of the corresponding Lagrangian system consists of only one hyperbolic periodic orbit, see \cite{WY10,WY11}.
By the weak KAM approaches developed for contact Hamilton-Jacobi equations, we will show that the exponential rate of convergence can be achieved for both the solution semigroup and its 1-graph under moderate increasing assumptions proposed on the contact Hamiltonians. More precisely, we obtain
\begin{theorem}\label{main}
Let $H:T^{\ast}M\times\R\rightarrow\R$ be a contact Hamiltonian satisfying (H1)-(H3), if $0\in\mathcal{C}_{H}$, then \eqref{sta} admits a unique viscosity solution $u_{-}(x)$ and for every $\varphi(x)\in C^0(M,\R)$, there exists $\lambda:=\lambda(H)>0$ such that as $t\rightarrow\infty$,
\begin{equation}\label{ecs}
\|T_{t}\varphi-u_{-}\|_{C^{0}}\leq O(e^{-\lambda t}),
\end{equation}
\begin{equation}\label{ecj}
d_{H}(\bar{\mathcal{J}}^{1}_{T_{t}\varphi},\bar{\mathcal{J}}^{1}_{u_{-}})\leq O(e^{-\frac{\lambda}{3}t}),
\end{equation}
where $d_{H}$ denotes the Hausdorff metric induced by any Riemannian metric on $T^{\ast}M\times\R$.
\end{theorem}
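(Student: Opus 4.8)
The plan is to prove \eqref{ecs} first and then deduce \eqref{ecj} from it.

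\emph{The $C^0$ estimate.} The mechanism is that (H3) makes $T_t$ a strict contraction on vertical translates of $u_-$. Recall from the implicit variational principle \eqref{uxt} that $T_t\psi(x)=\inf\{u_\gamma(t):\gamma(t)=x\}$, where $u_\gamma$ solves $\dot u_\gamma(s)=L(\gamma(s),u_\gamma(s),\dot\gamma(s))$ with $u_\gamma(0)=\psi(\gamma(0))$, $L$ being the Legendre dual of $H$; by the envelope theorem $\partial_u L=-\partial_u H\in[-\Lambda,0)$. First I would record the a priori estimates: for $\|\psi\|_{C^0}\le R$ the minimizing curves of $T_t\psi(x)$ are equi-Lipschitz and the corresponding $u_\gamma$ are uniformly bounded, so the associated contact characteristics stay in a compact set $\mathcal{K}=\mathcal{K}(R)\subset T^{\ast}M\times\R$ on which $\partial_u H\ge\delta>0$, hence $\partial_u L\le-\delta$. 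Comparing, along a \emph{fixed} curve $\gamma$, the solutions $u_\gamma$ with data $\psi$ and $\psi+c$ ($c\ge0$): the difference $w$ has $w(0)=c$ and $\dot w=\big(\int_0^1\partial_u L\,d\theta\big)w\le-\delta w$ as long as the relevant segment stays in $\mathcal{K}$, so $w(t)\le e^{-\delta t}c$; taking infima over the minimizing curves (which do stay in $\mathcal{K}$) gives $T_t(u_-+c)\le u_-+e^{-\delta t}c$ and, symmetrically, $T_t(u_--c)\ge u_--e^{-\delta t}c$, for $c$ in a bounded range. Finally, using \eqref{c} pick $t_0=t_0(\varphi)$ with $c_0:=\|T_{t_0}\varphi-u_-\|_{C^0}\le 1$; then $u_--c_0\le T_{t_0}\varphi\le u_-+c_0$, and applying $T_{t-t_0}$, the monotonicity of the solution semigroup (again a consequence of (H3)), the fixed-point property $T_{s}u_-=u_-$ (as $u_-$ solves \eqref{sta}), and the contraction above yields $\|T_t\varphi-u_-\|_{C^0}\le e^{-\delta(t-t_0)}c_0=O(e^{-\delta t})$. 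Uniqueness of $u_-$ follows from the comparison principle for \eqref{sta}, available because $\partial_u H>0$. This gives \eqref{ecs} with $\lambda=\delta=\delta(H)$ determined by $\mathcal{K}(1)$.

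\emph{Reduction of the $1$-graph estimate.} For $t\ge1$ the function $T_t\varphi$ is semiconcave (by (H1), through the regularity of the fundamental solution), and so is $u_-=T_1u_-$, with one common constant $K=K(H)$. Thus \eqref{ecj} reduces to the following interpolation: there is $C=C(K,M)$ such that for any two $K$-semiconcave $u,v:M\to\R$ with $\|u-v\|_{C^0}\le\epsilon$ one has $d_H\big(\bar{\mathcal{J}}^{1}_{u},\bar{\mathcal{J}}^{1}_{v}\big)\le C\epsilon^{1/3}$; applying this with $u=T_t\varphi$, $v=u_-$ and $\epsilon=O(e^{-\lambda t})$ produces \eqref{ecj}.

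\emph{The interpolation lemma, and the main difficulty.} By symmetry it is enough to control $d\big((x,u(x),p),\bar{\mathcal{J}}^{1}_{v}\big)$ for $(x,u(x),p)\in\bar{\mathcal{J}}^{1}_{u}$, and by density together with upper semicontinuity of reachable gradients one may take $x\in\mathcal{D}_u$, $p=d_xu$. Semiconcavity of $u$ at $x$ gives $u(z)\le P(z):=u(x)+p\cdot(z-x)+\tfrac K2|z-x|^2$, so $g:=v-P$ is semiconcave with $g\le\epsilon$ everywhere and $g(x)\ge-\epsilon$. On a ball $B(x,\rho)$ let $y_\rho$ maximize $g$ over $\bar B(x,\rho)$ and take a differentiability point $y$ of $g$ (equivalently of $v$) at distance $\le\sigma$ from $y_\rho$; semiconcavity of $g$ at $y$, together with $g\le\epsilon$, $g(x)\ge-\epsilon$ and the Lipschitz bound, bounds $|d_yg|$ in every direction by a term of the shape $(\epsilon+\mathrm{Lip}\cdot\rho)/\tau+K\tau$ over admissible $\tau\lesssim\rho$. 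Minimizing this and balancing the three free scales $\rho,\sigma,\tau$ against $\epsilon$ leads to $\rho\sim\sigma\sim\tau\sim\epsilon^{1/3}$, hence to $|x-y|+|p-d_yv|\le C(K,M)\epsilon^{1/3}$, while $|u(x)-v(y)|\le\epsilon+\mathrm{Lip}\,|x-y|\le C\epsilon^{1/3}$; since $(y,v(y),d_yv)\in\bar{\mathcal{J}}^{1}_{v}$, this is the claim. Bookkeeping this optimal balance is exactly what yields the exponent $\tfrac13$ rather than the $\tfrac12$ one would get from a two-sided $C^{1,1}$ bound, the loss being intrinsic here because $u$ is only \emph{one-sidedly} regular, so the gradient of $v$ can be recovered only at an auxiliary point displaced from $x$. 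I expect this position-to-momentum interpolation to be the crux. A variant staying closer to the dynamics — using $\bar{\mathcal{J}}^{1}_{T_t\varphi}=\Phi^H_{t/3}\big(\bar{\mathcal{J}}^{1}_{T_{2t/3}\varphi}\big)$ for the contact flow $\Phi^H$ and the backward-calibration inclusion $\bar{\mathcal{J}}^{1}_{u_-}\subseteq\Phi^H_{t/3}(\bar{\mathcal{J}}^{1}_{u_-})$, then comparing characteristics issuing from $C^0$-close data and invoking the dissipativity from (H3) to keep the flowed graphs in a fixed compact region — runs into the same phenomenon and the same fractional loss.
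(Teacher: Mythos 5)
Your first half (the $C^0$ estimate) is essentially sound and close in spirit to the paper's Step II: where you run a Gronwall-type comparison along minimizers to show $T_t(u_-\pm c)$ contracts onto $u_-$ at rate $e^{-\delta t}$ with $\delta$ a lower bound of $\partial H/\partial u$ on a compact set containing the relevant characteristics, the paper sandwiches $T_{t+t_c}\varphi$ and $u_-$ between $\bar{T}_t\circ T_{t_c}[\pm c]$ for an auxiliary discounted Hamiltonian $\bar H(x,u,p)=\lambda(u-u_-(x))+H(x,u_-(x),p)$ and then quotes the explicit exponential convergence of discounted solutions; both routes hinge on the same compactness lemma producing $\lambda=\inf_{K_0}\partial H/\partial u>0$.

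The second half, however, has a genuine gap: the interpolation lemma you reduce \eqref{ecj} to is false. Uniform semiconcavity plus $C^0$-closeness does not control the Hausdorff distance of the $1$-graphs by any modulus, let alone $C\epsilon^{1/3}$. Concretely, take (near a point of $M$, suitably closed up away from it with a fixed semiconcavity constant) $u(z)=-|z|^{1+\alpha}$ and $v(z)=-|z|$; then $\|u-v\|_{C^0}\leq C\alpha\to 0$, both are concave, and $u$ is differentiable at $0$ with $d_0u=0$, so $(0,u(0),0)\in\bar{\mathcal{J}}^{1}_{u}$, while every element of $\bar{\mathcal{J}}^{1}_{v}$ with base point near $0$ has momentum $\pm1$; hence $d_H(\bar{\mathcal{J}}^{1}_{u},\bar{\mathcal{J}}^{1}_{v})$ stays bounded below independently of $\alpha$. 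The defect is visible in your own sketch: since $u$ is only semiconcave, you have a one-sided bound $g=v-P\leq\epsilon$ everywhere but a lower bound only at the single point $x$, so your directional estimate necessarily carries the term $\mathrm{Lip}\cdot\rho/\tau$, which after any balancing of $\rho,\sigma,\tau$ leaves a contribution of order the Lipschitz constant rather than a power of $\epsilon$ — exactly what the counterexample realizes (and a maximum point of $g$ may sit at a concave kink of $v$, where no reachable gradient, as opposed to supergradient, is small). The paper avoids this by not arguing at the level of abstract semiconcave functions at all: its key Lemma 3.1 bounds the calibration defect $l_{u}(x,\dot x)=L(x,u(x),\dot x)-\partial u(x,\dot x)$ from below by $\alpha\, d_{TM}((x,\dot x),\mathcal{L}_{u}\bar{\mathcal{G}}_{u})^{2}-\min_{p\in D^{*}u(x)}H(x,u(x),p)$, and then integrates this defect along the minimizing characteristics of $T_t\varphi$ (and along the backward calibrated curves of $u_-$) over a time window of length proportional to the distance $\delta_0$ of the endpoint velocity to the graph; the a priori bound on $(\dot\gm,\ddot\gm)$ coming from compactness of $K_0$ and the contact Hamilton equations keeps the distance $\geq\delta_0/2$ on that window, the exponential $C^0$ estimate (plus the decay of $H$ on $\bar{\mathcal{J}}^{1}_{T_t\varphi}$, Lemma 3.3) bounds the integrated defect by $O(e^{-\lambda t})$, and $\delta_0^{3}\lesssim e^{-\lambda t}$ gives the exponent $\lambda/3$. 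So the extra dynamical input — characteristics, bounded acceleration, energy dissipation — is precisely what rules out the pathology above; to repair your argument you would need to reinstate it (your closing "variant staying closer to the dynamics" is the right direction, but as sketched it is not carried out, and it is not the same as the paper's integrated-defect argument).
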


This result reveals an essential difference between certain dissipative systems and conservative systems from weak KAM aspects.

From dynamical point of view, we have to consider the second-order derivatives of certain global minimizing curves $\gm$ in order to achieve (\ref{ecj}). Roughly speaking, $\frac{\lambda}{3}$ can be implied by boundedness of $\ddot{\gm}$. Nevertheless, this exponent could be improved if the exponential decay of $\ddot{\gm}$ happens, for instance,  we have $\ddot{\gm}\equiv 0$ for the integrable contact Hamiltonian $H(x,u,p):=\lambda u+\frac{1}{2}|p|^2$ with nonzero-constant initial values. In this case, both (\ref{ecs}) and (\ref{ecj}) have exactly the same convergence rate $O(e^{-\lambda t})$.


It is worth mentioning that the convergence result \eqref{c} can be obtained under (H1)-(H2) and the assumption that $0\leq\frac{\partial H}{\partial u}\leq\Lambda$, under which $\frac{\partial H}{\partial u}=0$ could happen on subsets of $T^*M\times\R$. However, for Theorem \ref{main}, the assumption $\frac{\partial H}{\partial u}>0$ is necessary as the following example shows.

\begin{example}\label{ex}
Let $M=\mathbb{T}$ and the contact Hamiltonian $H_0:T^\ast\mathbb{T}\times\R\rightarrow\R$ is defined as
\begin{equation}
H_0(x,u,p):=\frac{1}{2}(p^2+\rho(u^3)),
\end{equation}
where $\rho:\R\rightarrow\R$ is a $C^\infty$ increasing function that equals to identity on $[-1,0]$ and takes constant values on $(-\infty,-2]$ and $[1,\infty)$.
\end{example}

It is easy to see that $H_0$ satisfies the assumptions (H1)-(H2) and $0\leq\frac{\partial H}{\partial u}\leq\Lambda$. In particular, $\frac{\partial H}{\partial u}(x,0,p)=0$. For the initial value $\varphi(x)\equiv-1$, a direct calculation (see Appendix A) shows that the function
\begin{equation*}
T_t\varphi(x)=-(1+t)^{-\frac{1}{2}}, \quad \forall x\in M,
\end{equation*}
is the unique viscosity solution of \eqref{evo}. Note that $u_{-}(x)\equiv0$ is a viscosity solution of \eqref{sta}, we have \[\|T_t\varphi-u_{-}\|_{C^{0}}\sim O\left(\frac{1}{t^{2}}\right), \quad {\rm as}\quad  t\rightarrow\infty.\]

This note is outlined as follows. In Section 2, we fix some notations and then recall the necessary results as preliminaries. A key lemma connecting the convergence rate of the solution semigroup and the one of its 1-graph is provided in Section 3, from which the proof of Theorem \ref{main} is completed in Section 4. Four appendices are included as supplemental materials.

\section{Notations and preliminaries}
In this section, we shall fix the notations used in this note once and for all, and then present some results that are necessary for later sections. Some results are collected from previous works for which the proof is omitted, others results are formulated and proved in detail. We shall try our best to make the presentation available to almost everyone.

\subsection{Generalities}
Let $M$ be a connected, closed $C^{\infty}$ Riemannian manifold. We will denote $(x,\dot{x})$ a point of the tangent bundle $TM$ with $\dot{x}\in T_{x}M$, $(x,p)$ a point of the cotangent bundle $T^{\ast}M$ with $p\in T^{\ast}_{x}M$ and for any $x\in M$, $\langle\cdot,\cdot\rangle$ the canonical duality between $T^{\ast}_{x}M$ and $T_{x}M$. We will denote $z$ or, in component fashion, $(x,u,p)$ a point in $J^{1}(M,\R)\cong T^{\ast}M\times\R$, the space of 1-jets over $M$, $(x,u,\dot{x})$ a point in $TM\times\R$. Let $\pi_{1}:T^{\ast}M\times\R\rightarrow M$ be the projection $(x,u,p)\mapsto x$ and $\pi:T^{\ast}M\times\R\rightarrow T^{\ast}M$ the projection $(x,u,p)\mapsto(x,p)$ which forgets the $u$-component.

We now focus on the Riemannian metric $g$ defined on $M$ and its derivatives. We abuse to denote $|\cdot|_{x}$ the norm on $T^{\ast}_{x}M$ or $T_{x}M$ induced by $g$ since there is no need to clarify them and $d_{M}$ be the metric on $M$ induced by $g$. By elementary Riemannian geometry, there is a canonical Riemannian structure $\tilde{g}$ on $TM$ induced by $g$, see \cite{dC} for details. We denote $d_{TM}$ the metric on $TM$ induced by $\tilde{g}$ and notice that
\begin{itemize}
  \item for two tangent vectors $\dot{x},\dot{x}^{\prime}\in T_{x}M$,
        \begin{equation}\label{dtm-1}
        d_{TM}((x,\dot{x}),(x,\dot{x}^{\prime}))=|\dot{x}-\dot{x}^{\prime}|_{x}.
        \end{equation}

  \item for $(x_{1},\dot{x}_{1}),(x_{2},\dot{x}_{2})\in TM$,
        \begin{equation}\label{dtm-2}
        d_{M}(x_{1},x_{2})\leq d_{TM}((x_{1},\dot{x}_{1}),(x_{2},\dot{x}_{2})).
        \end{equation}
\end{itemize}
Let $|\cdot|$ denote the usual norm on $\R$, we define canonical metrics on $TM\times\R$ as for $(x_{1},u_{1},\dot{x}_{1})$ and $(x_{2},u_{2},\dot{x}_{2})\in TM\times\R$, $d_{TM\times\R}=d_{TM}((x_{1},\dot{x}_{1}),(x_{2},\dot{x}_{2}))+|u_{1}-u_{2}|$.

\subsection{Functional setting}
Let $C^{0}(M,\R)$ be the Banach space of all continuous functions on $M$ with the usual $C^{0}$ norm $\|u\|_{C^{0}}:=\max_{x\in M}|u(x)|$ and $C^{\infty}(M,\R)$ the space of all smooth functions on $M$. For $\kappa>0$, we say a function $u:M\rightarrow\mathbb{R}$ is $\kappa$-Lipschitz continuous if for any $x_{1},x_{2}\in M$, $|u(x_{1})-u(x_{2})|\leq\kappa d_{M}(x_{1},x_{2})$. We say $u$ is a Lipschitz function if there is $\kappa>0$ such that $u$ is $\kappa$-Lipschitz continuous. The same notations apply to the case when $M$ is replaced by other $C^{\infty}$ manifolds.

For a Lipschitz function $u:M\rightarrow\R$, we denote $\mathcal{D}_{u}\subseteq M$ the set of differentiable points of $u$. A co-vector $p\in T^{\ast}_{x}M$ is called a reachable differential of $u$ at $x$ if there exists a sequence $\{x_{k}\}_{k\in\mathbb{N}}\subset\mathcal{D}_{u}\setminus\{x\}$ with $\lim_{k\rightarrow\infty}x_{k}=x$ such that $\lim_{k\rightarrow\infty}d_{x}u(x_{k})=p$. By Rademacher's theorem, $\overline{\mathcal{D}_{u}}=M$ and we define
\begin{itemize}
  \item $D^{*}u(x)\subseteq T^{\ast}_{x}M$ the set of all reachable differentials of $u$ at $x$,

  \item a compact subset of $T^{\ast}M$ called the graph of differential of $u$ by
        \begin{equation}\label{def-g}
        \bar{\mathcal{G}}_{u}:=\overline{\{(x,d_{x}u(x)):x\in\mathcal{D}_{u}\}}.
        \end{equation}
\end{itemize}
It follows that for any $x\in M$, $D^{*}u(x)=\bar{\mathcal{G}}_{u}\cap T^{\ast}_{x}M$ is nonempty and compact and by \eqref{def-j},  $\pi_{1}\bar{\mathcal{J}}^{1}_{u}=M, \pi\bar{\mathcal{J}}^{1}_{u}=\bar{\mathcal{G}}_{u}$.

We introduce the notion of locally semiconcavity on a Riemannian manifold and directional derivative, then present an useful lemma related to them.
\begin{definition}\label{semiconcave}
Let $\mathcal{O}$ be an open subset of a Riemannian manifold $M$, a function $u:\mathcal{O}\rightarrow\R$ is said to be semiconcave if there exists a nondecreasing, upper semicontinuous function $\omega:\R^{+}\rightarrow\R^{+}$ such that
\begin{enumerate}
  \item $\omega(r)=o(r)$ as $r\rightarrow0^{+}$,
  \item for any constant-speed geodesic path $\gamma(t), t\in[0,1]$, whose image is included in $\mathcal{O}$,
        \begin{equation}\label{semiconcave ineq}
        (1-t)u(\gamma(0))+tu(\gamma(1))-u(\gamma(t))\leq t(1-t)\omega(d_{M}(\gamma(0),\gamma(1))),
        \end{equation}
        where $d$ is the induced distance on $M$.
\end{enumerate}

A function $u:M\rightarrow\mathbb{R}$ is said to be locally semiconcave if for each $x\in M$ there is a neighborhood $\mathcal{O}$ of $x$ in $M$ such that \eqref{semiconcave ineq} holds true provided $\gamma(0),\gamma(1)\in\mathcal{O}$.
\end{definition}

The following property shows that locally semiconcave assumption implies much more than only being locally Lipschitzian, see \cite[Theorem 3.2.1; Theorem 3.3.6]{CS}.
\begin{lemma}\label{dd}
Let $u:M\rightarrow\R$ be a locally semiconcave function, then for any $x\in M$ and a $C^{1}$ curve $\gamma:[0,\sigma]\rightarrow M$ with $\gamma(0)=x,\dot{\gamma}(0)=\dot{x}$, the directional derivative
\begin{equation*}
\partial u(x,\dot{x}):=\lim_{h\rightarrow0^{+}}\frac{u(\gamma(h))-u(x)}{h}
\end{equation*}
exists and
\begin{equation}\label{dd representation}
\partial u(x,\dot{x})=\min_{p\in D^{*}u(x)}\langle p,\dot{x}\rangle.
\end{equation}
Moreover, let $\eta:[a,b]\rightarrow M$ be any $C^{1}$ curve, then
\begin{equation}\label{NL}
u(\gamma(b))-u(\gamma(a))=\int_{a}^{b}\partial u(\gamma(s),\dot{\gamma}(s))ds.
\end{equation}
\end{lemma}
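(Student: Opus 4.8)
The plan is to reduce the first two assertions to the classical Euclidean theory of semiconcave functions and then to deduce \eqref{NL} from an absolute--continuity argument. Since $\partial u(x,\dot x)$, $D^{*}u(x)$ and the semiconcavity inequality are all local, I would fix a point $x_{0}$ on the curve and work in a smooth coordinate chart $\phi:\mathcal{O}\to\R^{n}$ around $x_{0}$, and first check that semiconcavity survives the chart. Put $v:=u\circ\phi^{-1}$ on $\phi(\mathcal{O})$; given nearby $y_{0},y_{1}$, compare the Euclidean segment $y_{t}=(1-t)y_{0}+ty_{1}$ with the image under $\phi$ of the reparametrised minimising geodesic joining $\phi^{-1}(y_{0})$ and $\phi^{-1}(y_{1})$. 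Because a geodesic solves a second-order ODE, these two curves agree at the endpoints and differ by at most $O\!\left(t(1-t)|y_{0}-y_{1}|^{2}\right)$ uniformly on compact sets; combining this with \eqref{semiconcave ineq} for $u$, with the local Lipschitz bound on $u$, and with the bi-Lipschitz equivalence $d_{M}(\phi^{-1}(y_{0}),\phi^{-1}(y_{1}))\asymp|y_{0}-y_{1}|$, one obtains $(1-t)v(y_{0})+tv(y_{1})-v(y_{t})\le t(1-t)\hat\omega(|y_{0}-y_{1}|)$ with a modulus $\hat\omega(r)=o(r)$, i.e. $v$ is semiconcave near $\phi(x_{0})$ in the Euclidean sense.

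For such a Euclidean semiconcave $v$, the cited results \cite[Theorem 3.2.1; Theorem 3.3.6]{CS} state that the superdifferential $D^{+}v(y)$ is a nonempty compact convex set equal to $\mathrm{co}\,D^{*}v(y)$, and that for each $w\in\R^{n}$ the one-sided directional derivative of $v$ along $t\mapsto y+tw$ exists and equals $\min_{q\in D^{+}v(y)}\langle q,w\rangle$. Since $q\mapsto\langle q,w\rangle$ is linear and $D^{*}v(y)$ is compact, one has $\min_{q\in D^{+}v(y)}\langle q,w\rangle=\min_{q\in D^{*}v(y)}\langle q,w\rangle$. Transporting this back through $\phi$, with $d\phi$ identifying $D^{*}v(\phi(x))$ with $D^{*}u(x)$, proves the existence of $\partial u(x,\dot x)$ and formula \eqref{dd representation} for the particular $C^{1}$ curve given in the chart by $t\mapsto\phi(x)+t\,d\phi(\dot x)$. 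To pass to an arbitrary $C^{1}$ curve $\gamma$ with $\gamma(0)=x$, $\dot\gamma(0)=\dot x$, write $\gamma(h)=\phi(x)+h\,d\phi(\dot x)+o(h)$ in the chart; as $u$ is locally Lipschitz with some constant $L$ near $x$, the difference quotients built from $\gamma$ and from the segment differ by at most $L\,o(h)/h\to 0$, so the limit is unchanged, and the minimum in \eqref{dd representation} is attained because $D^{*}u(x)$ is compact, as already noted in Section 2.

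For the integral formula, let $\gamma:[a,b]\to M$ be a $C^{1}$ curve and set $f(s):=u(\gamma(s))$. Being the composition of a locally Lipschitz function with a $C^{1}$ map on a compact interval, $f$ is Lipschitz, hence absolutely continuous, so $f$ is differentiable at a.e.\ $s$ and $f(b)-f(a)=\int_{a}^{b}f'(s)\,ds$. Applying the previous step to the shifted curve $h\mapsto\gamma(s+h)$, whose initial velocity is $\dot\gamma(s)$, shows that $f$ has at every $s$ a right-hand derivative equal to $\partial u(\gamma(s),\dot\gamma(s))$; wherever $f'(s)$ exists it must coincide with this right-hand derivative, so $f'(s)=\partial u(\gamma(s),\dot\gamma(s))$ for a.e.\ $s$. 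In particular $s\mapsto\partial u(\gamma(s),\dot\gamma(s))$ is measurable, and substituting into the fundamental theorem of calculus yields \eqref{NL}.

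I expect the only genuine obstacle to be the bookkeeping in the first step: verifying carefully that Riemannian local semiconcavity descends to Euclidean local semiconcavity in a chart with a controlled modulus (the geodesic-versus-segment comparison and the resulting change of $\omega$), and making sure that the reachable-differential set $D^{*}u(x)$ of Section 2 is the one that corresponds to the $D^{*}$ of \cite{CS} under the cotangent map $d\phi$, rather than to the full superdifferential. Once these identifications are in place, the Lipschitz-perturbation argument for general $C^{1}$ curves and the absolute-continuity argument for \eqref{NL} are routine.
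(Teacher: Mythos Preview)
Your proposal is correct. The paper does not actually prove this lemma: it simply records it as a consequence of \cite[Theorem 3.2.1; Theorem 3.3.6]{CS} and moves on, so there is no argument in the paper to compare with beyond that citation. Your write-up supplies precisely the details the paper leaves implicit---the chart reduction showing that Riemannian local semiconcavity yields Euclidean semiconcavity of $u\circ\phi^{-1}$, the identification of $D^{*}u(x)$ with $D^{*}(u\circ\phi^{-1})(\phi(x))$ via $d\phi$, the Lipschitz perturbation showing independence from the representing $C^{1}$ curve, and the absolute-continuity argument for \eqref{NL}---and each of these steps is sound.
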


\begin{remark}\label{dd-r}
From \cite{CS}, it is easy to see that
\begin{enumerate}
  \item the directional derivative $\partial u(x,\dot{x})$ does not depend on the $C^{1}$ curves representing the velocity vector $(x,\dot{x})$.
  \item  the viscosity solutions of (\ref{evo}) and (\ref{sta}) are locally semiconcave.
\end{enumerate}
\end{remark}

Let $H:T^{\ast}M\times\R\rightarrow\R$ be a $C^{r}$ contact Hamiltonian satisfies (H1)-(H3). As usual, the Lagrangian $L(x,u,\dot{x})$ associated to $H(x,u,p)$ is defined by
\[L(x,u, \dot{x}):=\sup_{p\in T_x^*M}\{\langle p,\dot{x}\rangle-H(x,u,p)\}.\]
From (H1)-(H3), it follows that the Lagrangian $L(x,u,\dot{x})$ satisfies
\begin{itemize}
  \item [\textbf{(L1)}]  \textbf{Positive Definiteness}. For every $(x,u)\in M\times\R$, the second partial derivative $\partial^2 L/\partial {\dot{x}}^2 (x,u,\dot{x})$ is positive definite as a quadratic form;

  \item [\textbf{(L2)}] \textbf{Superlinear Growth}. For every $(x,u)\in M\times\R$, $L(x,u,\dot{x})$ is superlinear with respect to $\dot{x}$, that is,          \begin{equation*}
                        \lim_{|\dot{x}|_{x}\rightarrow\infty}\frac{L(x,u,\dot{x})}{|\dot{x}|_{x}}\rightarrow\infty,
                        \end{equation*}
                        where $\|\cdot\|_{x}$ denotes the norm on $T_{x}M$ induced by the Riemannian metric,
  \item [\textbf{(L3)}] \textbf{Moderate decreasing}. There is constant $\Lambda>0$ such that for every $(x,u,\dot{x})\in TM\times\R$,
                        \begin{equation*}
                        -\Lambda\leq \frac{\partial L}{\partial u}(x,u,\dot{x})<0.
                        \end{equation*}
\end{itemize}
From the first two assumptions on $H$ and $L$, there is a global diffeomorphism $\mathcal{L}:T^{\ast}M\times\R\rightarrow TM\times\R$ defined as
\begin{equation}\label{lt}
\mathcal{L}(x,u,p)=(x,u,\frac{\partial H}{\partial p}(x,u,p)),
\end{equation}
and its inverse is
\begin{equation}
\mathcal{L}^{-1}(x,u,\dot{x})=(x,u,\frac{\partial L}{\partial\dot{x}}(x,u,\dot{x})),
\end{equation}
where $\frac{\partial L}{\partial\dot{x}}$ and $\frac{\partial H}{\partial p}$ denote the partial derivative of $L$ and $H$ with respect to their third arguments respectively. Note that our definition is different from the usual one that reverse roles of $\mathcal{L}$ and $\mathcal{L}^{-1}$. We define the metric $d_{T^{\ast}M\times\R}:=\mathcal{L}^{\ast}d_{TM\times\R}$, since $\mathcal{L}$ is a global diffeomorphism, we notice that by definition, metrics $d_{TM\times\R}$ and $d_{T^{\ast}M\times\R}$ are both induced by Riemannian metrics on their domains.

\subsection{Contact Hamilton ODE}
Let $H:T^{\ast}M\times\mathbb{R}\rightarrow\mathbb{R}$ be a $C^{r}$ contact Hamiltonian, there is an ODE system called contact Hamilton equations associated to $H$, which is defined as
\begin{equation}\label{hjech}
\begin{cases}
\dot{x}=\frac{\partial H}{\partial p},\\
\dot{p}=-\frac{\partial H}{\partial x}-\frac{\partial H}{\partial u}p,\\
\dot{u}=\frac{\partial H}{\partial p}p-H.
\end{cases}
\end{equation}
We shall denote $\Phi^{t}_{H}$ the local flow generated by the above system and call it the contact Hamilton flow, since it is a generalization of the classical Hamilton flow. Recall that along an orbit of classical Hamilton flow, the energy function $H$ preserves. Now for the contact Hamiltonian flow, the variation of energy function along an orbit $\Phi^{\tau}_{H}z:[t_{0},t_{1}]\rightarrow T^{\ast}M\times\mathbb{R}$ of \eqref{hjech} takes the form
\begin{equation}\label{ef}
\frac{d}{d\tau}H(\Phi^{\tau}_{H}z)=-\frac{\partial H}{\partial u}(\Phi^{\tau}_{H}z)\cdot H(\Phi^{\tau}_{H}z).
\end{equation}
The above equation and assumption (H3) imply that for $t>0$,
\begin{equation}\label{eb}
e^{-\Lambda t}|H(z)|\leq|H(\Phi^{t}_{H}z)|<|H(z)|.
\end{equation}

\subsection{Contact Hamilton-Jacobi equations}
This section is devoted to introducing some preliminary results on the contact Hamilton-Jacobi equations \eqref{evo} and \eqref{sta} which are useful for our proof of Theorem \ref{main}. To use the dynamical method to study equations \eqref{evo} and \eqref{sta}, we carry out the following definition, see also \cite{SWY,WWY,WWY1}.
\begin{definition}\label{sg}
Let $L:TM\times\R\rightarrow\R$ be a $C^{r}$ Lagrangian satisfying (L1)-(L3), for each $t\geq0$, there is a functional operator $T_t:C^0(M,\R)\rightarrow C^0(M,\R)$ associated to $L$ implicitly defined as
\begin{equation}\label{uxt}
T_t\varphi(x)=\inf_{\gm(t)=x}\left\{\varphi(\gm(0))+\int_0^tL(\gm(\tau),T_\tau\varphi(\gm(\tau)),\dot{\gm}(\tau))d\tau\right\},
\end{equation}
where the infimum is taken among continuous and piecewise $C^1$ curves $\gm:[0,t]\rightarrow M$.
\end{definition}

Let us collect some elementary properties of the operator $T_{t}$ shown by \cite{SWY,WWY}.
\begin{proposition}\label{prre-1}
Let $T_t:C^0(M,\R)\rightarrow C^0(M,\R)$ be defined as \eqref{uxt}, there hold
\begin{itemize}
  \item [(i).] {\rm Monotonicity}. For every $\varphi,\psi\in C^0(M,\R)$, if $\varphi\leq \psi$, then $T_t\varphi(x)\leq T_t\psi(x)$.

  \item [(ii).] {\rm Non-expansiveness}. For every $\varphi,\psi\in C^0(M,\R)$, $\|T_t\varphi(x)-T_t\psi(x)\|_{C^0}\leq \|\varphi(x)-\psi(x)\|_{C^0}$.

  \item [(iii).] {\rm Variational principle}. The infimum in \eqref{uxt} can be attained by an absolutely continuous curve $\gm_{x,t}:[0,t]\rightarrow M$ with $\gm_{x,t}(t)=x$ called a minimizer of $T_t\varphi(x)$.

  \item [(iv).] {\rm Semigroup Property}. For $s,t\geq0, T^{-}_{s+t}=T^{-}_{s}\circ T^{-}_{t}$.
\end{itemize}
\end{proposition}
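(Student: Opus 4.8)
The plan is to reduce all four statements to the elementary theory of scalar ordinary differential equations by passing to the following equivalent ``ODE along a curve'' description of $T_t$. Fix $\varphi\in C^0(M,\R)$ and an absolutely continuous curve $\gm:[0,t]\to M$. Since by (L3) the map $u\mapsto L(x,u,\dot x)$ is Lipschitz with constant $\Lambda$, uniformly in $(x,\dot x)$ on compact velocity sets, a Carath\'eodory-type existence--uniqueness theorem produces a unique solution $u_\gm^\varphi$ of
\begin{equation*}
\dot u(s)=L(\gm(s),u(s),\dot\gm(s)),\qquad u(0)=\varphi(\gm(0)),
\end{equation*}
on all of $[0,t]$, and $s\mapsto u^\varphi_\gm(s)$ depends monotonically on the initial value by scalar comparison. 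One then checks, exactly as in \cite{SWY,WWY}, that $(t,x)\mapsto\inf_{\gm(t)=x}u^\varphi_\gm(t)$ is the unique function satisfying the implicit relation \eqref{uxt}, so that $T_t\varphi(x)=\inf_{\gm(t)=x}u^\varphi_\gm(t)$; all of (i)--(iv) are read off from this formula. (Alternatively, well-posedness of \eqref{uxt} on a short time interval follows from a Banach contraction on $C([0,T_0],C^0(M,\R))$, again using the Lipschitz-in-$u$ bound from (L3), and is then propagated to all times by (iv).)

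For (i), if $\varphi\le\psi$ then $u^\varphi_\gm(0)=\varphi(\gm(0))\le\psi(\gm(0))=u^\psi_\gm(0)$, so scalar comparison for the above ODE gives $u^\varphi_\gm(s)\le u^\psi_\gm(s)$ for all $s\in[0,t]$ and every $\gm$; taking the infimum over $\gm$ with $\gm(t)=x$ yields $T_t\varphi(x)\le T_t\psi(x)$. For (ii), set $c:=\|\varphi-\psi\|_{C^0}$ and observe that $v(s):=u^\psi_\gm(s)+c$ satisfies $\dot v(s)=L(\gm(s),v(s)-c,\dot\gm(s))\ge L(\gm(s),v(s),\dot\gm(s))$ because $\partial L/\partial u<0$ by (L3), while $v(0)=\psi(\gm(0))+c\ge\varphi(\gm(0))=u^\varphi_\gm(0)$; hence $v$ is a supersolution lying above the solution $u^\varphi_\gm$, so $u^\varphi_\gm(s)\le u^\psi_\gm(s)+c$, and symmetrically $u^\psi_\gm(s)\le u^\varphi_\gm(s)+c$. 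Passing to the infimum over $\gm$ gives $|T_t\varphi(x)-T_t\psi(x)|\le c$ for every $x$, i.e. non-expansiveness. It is worth stressing that here the sign condition $\partial L/\partial u<0$, not merely a Lipschitz bound, is what prevents an exponential Grönwall loss.

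For (iii), I would argue by Tonelli's direct method, which however rests on the a priori regularity that underlies the whole construction. Using the superlinearity (L2) together with a Grönwall estimate on the ODE above, one shows that the competitor curves in \eqref{uxt} with nearly minimal cost have uniformly bounded action, hence---by superlinearity---form an equi-absolutely-continuous family, and that $(\tau,y)\mapsto T_\tau\varphi(y)$ is continuous, indeed locally Lipschitz for $\tau>0$. Consequently the effective integrand $L\big(\gm(\tau),T_\tau\varphi(\gm(\tau)),\dot\gm(\tau)\big)$ is a Carath\'eodory Lagrangian which, being convex in $\dot\gm$ by (L1) and superlinear by (L2), makes $\gm\mapsto u^\varphi_\gm(t)$ sequentially lower semicontinuous along a minimizing sequence that is weakly precompact in $W^{1,1}$; the limit curve is the desired minimizer $\gm_{x,t}$.

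Finally, (iv) is dynamic programming via cut-and-paste. The inequality $T_{s+t}\varphi(x)\le T_s(T_t\varphi)(x)$ follows by concatenating a minimizer of $T_t\varphi$ ending at $\gm_2(0)$ with a minimizer $\gm_2:[0,s]\to M$ of $T_s(T_t\varphi)(x)$ and noting that, along the concatenation, $u^\varphi_\gm$ restricted to $[t,s+t]$ solves the same ODE with initial value $T_t\varphi(\gm_2(0))$, hence equals $u^{T_t\varphi}_{\gm_2}$ after the time shift. For the reverse inequality, take a minimizer $\gm$ of $T_{s+t}\varphi(x)$ from (iii); then $u^\varphi_\gm(t)\ge T_t\varphi(\gm(t))$, and replacing this initial value by the smaller $T_t\varphi(\gm(t))$ only decreases the solution on $[t,s+t]$ by monotone dependence, so, writing $\gm_2:=\gm(\cdot+t)$, we get $T_{s+t}\varphi(x)=u^\varphi_\gm(s+t)\ge u^{T_t\varphi}_{\gm_2}(s)\ge T_s(T_t\varphi)(x)$. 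The main obstacle in this scheme is step (iii)---more precisely, the a priori estimates (uniform Lipschitz bounds on $T_\tau\varphi$ and uniform velocity bounds on minimizers) on which the direct method relies; once these are secured, the other three properties are soft consequences of scalar ODE comparison and concatenation.
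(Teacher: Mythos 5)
The paper does not actually prove Proposition \ref{prre-1}; it cites \cite{SWY,WWY}, where these properties are established directly from the implicit definition \eqref{uxt} via a fixed-point construction of the semigroup together with Gr\"onwall/comparison arguments. Your route is genuinely different in flavor: you replace \eqref{uxt} by the Herglotz-type representation $T_t\varphi(x)=\inf_{\gm(t)=x}u^{\varphi}_{\gm}(t)$, with $u^{\varphi}_{\gm}$ solving the Carath\'eodory equation $\dot u=L(\gm,u,\dot\gm)$, and then (i), (ii), (iv) become transparent scalar ODE comparison and concatenation statements; in particular you correctly isolate that the sign $\partial L/\partial u<0$, not just the Lipschitz bound, is what gives the contraction constant $1$ instead of $e^{\Lambda t}$. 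Granting the representation and the a priori estimates, those three derivations are correct.

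The caveats are where the real content sits. First, the equivalence of \eqref{uxt} with the ODE-along-curves formula is itself nontrivial: one inequality follows from $u^{\varphi}_{\gm}(\tau)\geq T_\tau\varphi(\gm(\tau))$ and the monotonicity of $L$ in $u$, but the reverse needs a dynamic programming argument along ($\eps$-)minimizers, i.e. essentially the content of (iii)--(iv); this representation is not literally what \cite{SWY,WWY} prove (they work with \eqref{uxt} directly), so ``one checks exactly as in \cite{SWY,WWY}'' conceals the main well-posedness step. Your alternative contraction argument on $C([0,T_0],C^0(M,\R))$ is the honest substitute, but it should be iterated step by step (the contraction time depends only on $\Lambda$) rather than ``propagated by (iv)'', which is among the things being proved. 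Second, in (iii) the map $\gm\mapsto u^{\varphi}_{\gm}(t)$ is not an integral functional, so Tonelli's lower semicontinuity theorem does not apply to it as stated; the clean fix, which you half-state, is to minimize instead the frozen functional $\varphi(\gm(0))+\int_0^t L(\gm,T_\tau\varphi(\gm),\dot\gm)\,d\tau$, which by \eqref{uxt} has the same infimum and is convex and superlinear in $\dot\gm$, once continuity (local Lipschitzness for $\tau>0$) of $(\tau,y)\mapsto T_\tau\varphi(y)$ and velocity bounds on near-minimizers are available. Those a priori estimates, which you rightly flag as the main obstacle, are precisely the cited results, so your write-up is a sound reduction to \cite{SWY,WWY} rather than a self-contained proof --- which is also all the paper itself offers.
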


The relationship between the operator $T_{t}$ and contact Hamilton-Jacobi equation is described in the following
\begin{proposition}\label{prre-2}
Let $T_t:C^0(M,\R)\rightarrow C^0(M,\R)$ be defined as \eqref{uxt}, there hold
\begin{itemize}
  \item [(i).] {\rm Evolutionary solution}. For every $\varphi\in C^0(M,\R)$, $u(t,x)=T_t\varphi(x)$ is the unique viscosity solution of the equation (\ref{evo}) with the initial condition $u(0,x)=\varphi(x)$. Moreover, the minimizers $\gm_{x,t}$ are $C^{r}$ and if one set \[(x(\tau), u(\tau), p(\tau))=\mathcal{L}^{-1}(\gm_{x,t}(\tau),T_\tau\varphi(\gm_{x,t}(\tau)),\dot{\gm}_{x,t}(\tau)),\] then the curve $(x(\tau), u(\tau), p(\tau))$ is of class $C^{r-1}$ and satisfies the contact Hamilton equations \eqref{hjech} on its domain. Thus we call the operator family $\{T^{-}_{t}\}_{t\geq0}$ the solution semigroup associated to \eqref{evo}.

  \item [(ii).] {\rm Stationary solution}. $u_{-}\in C^{0}(M,\R)$ is a viscosity solution of (\ref{sta}) if and only if for each $t\geq 0$, $T_tu_{-}(x)=u_{-}(x)$, which is also equivalent to
      \begin{itemize}
      \item [(1)] for each continuous and piecewise $C^1$ curve $\gm:[t_0,t_1]\rightarrow M$, we have
      \begin{equation*}
      u_-(\gm(t_1))- u_-(\gm(t_0))\leq\int_{t_0}^{t_1}L(\gm(\tau),u_-(\gm(\tau)),\dot{\gm}(\tau))d\tau;
      \end{equation*}
      \item [(2)] for any $x\in M$, there exists a $C^{r}$ curve $\gm_{x,-}:(-\infty,0]\rightarrow M$ with $\gm_{x,-}(0)=x$ such that for any $t_{0}\leq t_{1}\leq0$, we have
      \begin{equation*}
      u_-(\gm_{x,-}(t_{1}))-u_-(\gm_{x,-}(t_{0}))=\int_{t_{0}}^{t_{1}}L(\gm_{x,-}(\tau),u_-(\gm_{x,-}(\tau)),\dot{\gm}_{x,-}(\tau))d\tau
      \end{equation*}
      and if one set $(x(\tau), u(\tau), p(\tau))=\mathcal{L}^{-1}(\gm_{x,-}(\tau),u_{-}(\gm_{x,-}(\tau)),\dot{\gm}_{x,-}(\tau))$, then the curve $(x(\tau), u(\tau), p(\tau))$ is of class $C^{r-1}$ and satisfies the contact Hamilton equations \eqref{hjech} on its domain.
      \end{itemize}
\end{itemize}
\end{proposition}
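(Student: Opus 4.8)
The strategy is to assemble Proposition \ref{prre-2} from the weak KAM theory for contact Hamiltonians in \cite{WWY,WWY1,SWY}, spelling out the logical skeleton. The first point to settle is that \eqref{uxt} actually determines a well-defined operator: fixing $\varphi$, for $t$ small one runs a Banach fixed-point argument in $C^0([0,t]\times M,\R)$, the contraction factor ($\sim\Lambda t$) coming from the Lipschitz-in-$u$ bound in \textbf{(L3)}, and then one extends to all $t\geq 0$ by the semigroup property of Proposition \ref{prre-1}(iv). Granting this, together with the variational principle of Proposition \ref{prre-1}(iii), I would prove that $u(t,x)=T_t\varphi(x)$ is a viscosity solution of \eqref{evo} by the classical dynamic-programming argument: at a point where a $C^1$ test function touches $u$ from above (resp.\ below), one plugs short constant-velocity curves into the identity $T_t\varphi=T_{t-s}(T_s\varphi)$ and uses the Fenchel inequality $\langle p,\dot x\rangle-H(x,u,p)\le L(x,u,\dot x)$, with equality exactly at $\dot x=\partial H/\partial p$, to obtain the subsolution (resp.\ supersolution) inequality. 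Uniqueness then follows from the comparison principle for \eqref{evo}: assumption \textbf{(H3)} gives $\partial H/\partial u>0$, so the equation is proper, and the standard doubling-of-variables argument of Crandall--Ishii--Lions applies verbatim.

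For the regularity assertion in (i) I would argue as in Tonelli calculus of variations. A minimizer $\gm_{x,t}$ of \eqref{uxt}, paired with $w(\tau):=T_\tau\varphi(\gm_{x,t}(\tau))$, is an extremal of the $w$-coupled, non-autonomous action, hence satisfies the contact Euler--Lagrange system; transporting it through the Legendre diffeomorphism $\mathcal{L}^{-1}$ of \eqref{lt} produces a curve $(x(\tau),u(\tau),p(\tau))$ solving \eqref{hjech}. Since $H\in C^r$, the right-hand side of \eqref{hjech} is $C^{r-1}$, so the orbit is $C^{r-1}$; the first equation $\dot x=\partial H/\partial p$ then bootstraps $x(\cdot)$ to $C^r$, and since $\dot{\gm}_{x,t}=\partial H/\partial p(x(\cdot),u(\cdot),p(\cdot))$ the minimizer itself is $C^{r}$, as claimed.

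Part (ii) is then bookkeeping on top of (i). If $u_-$ is a viscosity solution of \eqref{sta}, then $v(t,x):=u_-(x)$ is a viscosity solution of \eqref{evo} with initial datum $u_-$, so by the uniqueness from (i), $T_tu_-\equiv u_-$; conversely, if $T_tu_-\equiv u_-$, then, since $T_tu_-$ is a viscosity solution of \eqref{evo} that is constant in $t$, $u_-$ solves $H(x,u_-,d_xu_-)=0$ in the viscosity sense. The equivalence with (1)--(2) comes from unfolding the fixed-point identity $u_-=T_tu_-$ against \eqref{uxt}: the inequality in (1) for all continuous, piecewise-$C^1$ curves is precisely $u_-\le T_{t_1-t_0}u_-$ read along a curve, i.e.\ $u_-$ dominates itself; the reverse inequality $u_-\ge T_tu_-$ forces the infimum in \eqref{uxt} to be attained, and concatenating the resulting minimizers over $[-n,0]$ while using the a priori Lipschitz bounds supplied by \textbf{(L1)}--\textbf{(L3)} (a compactness/diagonal argument) yields the backward calibrated curve $\gm_{x,-}:(-\infty,0]\to M$ of (2); its $C^{r}$ regularity and the fact that its Legendre lift solves \eqref{hjech} follow exactly as in part (i).

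The main obstacle is the implicit, self-referential nature of \eqref{uxt}: the action integrand involves the unknown $T_\tau\varphi$ evaluated along the competitor curve, so before any of the above can proceed one must establish that \eqref{uxt} defines a jointly continuous semigroup and derive the uniform Lipschitz estimates for $T_t\varphi$ ($t>0$) that make the Tonelli/compactness steps and the viscosity comparison run. Those estimates — obtained by comparison with smooth sub/supersolutions together with the superlinearity \textbf{(L2)} to bound the speeds of minimizers — are the technical heart; once they and the properness $\partial H/\partial u>0$ are in hand, the remainder is the standard weak KAM argument.
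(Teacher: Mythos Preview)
The paper does not prove Proposition \ref{prre-2}: it is one of the preliminary results ``collected from previous works for which the proof is omitted,'' with the attribution given to \cite{SWY,WWY} (and implicitly \cite{WWY1}). Your sketch is a faithful outline of how those references establish the result --- the Banach fixed-point construction of the implicit semigroup under \textbf{(L3)}, the dynamic-programming verification that $T_t\varphi$ is a viscosity solution, comparison from properness, Tonelli regularity of minimizers and their Legendre lift to orbits of \eqref{hjech}, and the fixed-point characterisation of stationary solutions with the diagonal extraction of backward calibrated curves --- so there is nothing to compare: you have supplied exactly the argument the paper chose to suppress.
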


\begin{remark}\label{c-semigroup}
For any $c\in\R$, we could similarly define a functional operator $T^{c}_t\varphi:C^0(M,\R)\rightarrow C^0(M,\R)$ by replacing $L$ in \eqref{uxt} with $L+c$. All properties listed in Proposition \ref{prre-1} and \ref{prre-2} hold if one replace $T_{t}, H$ and $L$ with $T^{c}_{t}, H-c$ and $L+c$ respectively. But for different $c$, the asymptotic behavior of $T^{c}_t\varphi(x)$ as $t\rightarrow\infty$ may be essentially different. This is related to the notion of admissible value set, see Appendix B.
\end{remark}

As mentioned in Remark \ref{c-semigroup}, whether $0$ belongs to $\mathcal{C}_{H}$ has essential effects on the asymptotic behavior of $T_{t}$. Precisely, we have
\begin{proposition}\label{prre-3}
Under assumptions (H1)-(H3), if $0\in\mathcal{C}_{H}$, then for a given $\delta>0$, $\{T_t\varphi(x)\}_{t\geq\delta}$ are uniformly bounded and equi-Lipschitz with respect to $t$. Moreover, $T_t\varphi(x)$ converges to a Lipschitz function ${u}_-(x)$  as $t$ goes to $+\infty$ and ${u}_-(x)$ is  a viscosity solution of the stationary equation (\ref{sta}).
\end{proposition}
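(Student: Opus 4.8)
\medskip
\noindent\textbf{Proof proposal.} The plan is to establish the three assertions in order --- a uniform $C^0$ bound on $\{T_t\varphi\}_{t\ge 0}$, an equi-Lipschitz bound on $\{T_t\varphi\}_{t\ge\delta}$, and $C^0$-convergence of $T_t\varphi$ to a fixed point of $T_t$ --- after which Proposition \ref{prre-2}(ii) identifies the limit as a viscosity solution of \eqref{sta}. The conceptual core is that once the orbit has been confined to a compact region of $T^\ast M\times\R$, the pointwise inequality of (H3) becomes a uniform one, $\partial H/\partial u\ge\kappa_0>0$, and this turns $T_t$ into an exponential contraction there.

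\emph{A priori estimates.} For the uniform bound I would appeal to Appendix B: $0\in\mathcal C_H$ provides a uniform bound $\|T_t0\|_{C^0}\le R_0$ for the orbit issued from $0$ (equivalently, it produces a fixed point of $T_t$, i.e. a viscosity solution of \eqref{sta}); non-expansiveness, Proposition \ref{prre-1}(ii), then gives $\|T_t\varphi\|_{C^0}\le R_0+\|\varphi\|_{C^0}=:R$ for every $t\ge 0$. For the equi-Lipschitz bound I would run the standard regularizing argument of weak KAM theory in the contact setting \cite{SWY,WWY}: for $t\ge\delta$, a minimizer $\gm_{x,t}$ of $T_t\varphi(x)$ carries running values $T_\tau\varphi(\gm_{x,t}(\tau))\in[-R,R]$, so by (L1)--(L2) the Lagrangian along $\gm_{x,t}$ is bounded below by a constant depending only on $R$, while $\int_0^tL\,d\tau=T_t\varphi(x)-\varphi(\gm_{x,t}(0))$ is bounded above by $2R$; superlinearity (L2) then forces $|\dot\gm_{x,t}|$ to be controlled off a small-measure set, and a competitor argument (prepending a short geodesic from $y$ to $x$ to a reparametrized piece of $\gm_{x,t}$) produces one constant $\kappa=\kappa(R,\delta)$ with every $T_t\varphi$, $t\ge\delta$, being $\kappa$-Lipschitz. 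I would cite this rather than reproduce it.

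\emph{Contraction and convergence.} Set $\kappa_0:=\min\{\partial H/\partial u(x,u,p):x\in M,\ |u|\le 3R,\ |p|\le\kappa\}$, which is positive by (H3) and compactness. The engine is the estimate
\begin{equation*}
\|T_s\psi-T_s\psi'\|_{C^0}\le e^{-\kappa_0 s}\,\|\psi-\psi'\|_{C^0},\qquad s\ge 0,
\end{equation*}
for data $\psi,\psi'$ with $C^0$-norm $\le R$. To prove it, put $c_0:=\|\psi-\psi'\|_{C^0}\le 2R$ and $\phi(s):=c_0e^{-\kappa_0 s}\ge 0$, and check that $h(s,x):=T_s\psi'(x)+\phi(s)$ is a viscosity supersolution of \eqref{evo}: testing $h$ from below leaves the residual term $\phi'(s)+(\partial H/\partial u)\,\phi(s)$, with $\partial H/\partial u$ evaluated at a point whose $u$-coordinate lies in $[-3R,3R]$ and whose $p$-coordinate has norm $\le\kappa$ (since $T_s\psi'$ is $\kappa$-Lipschitz), so $\partial H/\partial u\ge\kappa_0$ and the residual is $\ge\phi'(s)+\kappa_0\phi(s)=0$. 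Since $h(0,\cdot)=\psi'+c_0\ge\psi$, the comparison principle for \eqref{evo} (valid because $\partial H/\partial u>0$) gives $T_s\psi\le h(s,\cdot)$, and symmetry finishes the estimate. Applying it with $\psi=T_\delta\varphi$ and $\psi'=T_{\delta+r}\varphi$ (both of norm $\le R$) yields $\|T_{\delta+r+s}\varphi-T_{\delta+s}\varphi\|_{C^0}\le 2R\,e^{-\kappa_0 s}$, uniformly in $r\ge 0$; hence $\{T_t\varphi\}_{t\ge\delta}$ is Cauchy in $C^0(M,\R)$ and converges to a $\kappa$-Lipschitz function $u_-$. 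Finally $T_su_-=\lim_tT_s(T_t\varphi)=\lim_tT_{s+t}\varphi=u_-$ for every $s\ge 0$ by non-expansiveness, so $u_-$ solves \eqref{sta} by Proposition \ref{prre-2}(ii).

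\emph{Main obstacle.} The genuine difficulty is not a single computation but the logical ordering: (H3) yields a usable uniform lower bound $\kappa_0$ only after the orbit is trapped in the compact slab $\{|u|\le 3R,\ |p|\le\kappa\}$, so the two a priori estimates are logically prior and must not borrow anything from the contraction step; if $\partial H/\partial u$ were allowed to vanish (as in Example \ref{ex}), the gap collapses and the scheme fails. On the technical side, the two items requiring care are the a priori Lipschitz estimate (speed control plus the competitor construction, which I would import from \cite{SWY}) and the verification that $h$ is a bona fide viscosity supersolution together with the comparison principle for the contact evolutionary equation; both are standard in this circle of ideas. (One could instead take subsequential limits along $t_n\to\infty$ and invoke uniqueness of the stationary solution, but the contraction route is cleaner and, as a bonus, already contains the quantitative estimate \eqref{ecs} of Theorem \ref{main}.)
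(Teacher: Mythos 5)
You should first be aware that the paper never proves Proposition \ref{prre-3}: it is one of the results ``collected from previous works for which the proof is omitted'' and is imported from \cite{SWY}. So your proposal cannot be compared with an in-text argument, only with that source's strategy, and there it is genuinely different: the convergence \eqref{c} is known even under the weaker hypothesis $0\le\partial H/\partial u\le\Lambda$ (as the paper remarks after Theorem \ref{main}), so the quoted proof cannot rest on a strict contraction, whereas you exploit the strict positivity in (H3) to make $T_t$ an exponential contraction once the orbit is confined to a compact slab. Under the standing assumptions of this paper that is a legitimate and shorter route, and, as you observe, it already contains the quantitative estimate \eqref{ecs} that the paper obtains in Step II via the modified Hamiltonian $\bar H$ and Lemma \ref{dsep}. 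Your contraction step itself is sound: for the data you actually feed it ($T_\delta\varphi$ and $T_{\delta+r}\varphi$) all running solutions are $\kappa$-Lipschitz by the equi-Lipschitz estimate, so test gradients from below are bounded by $\kappa$, the $u$-arguments stay in a fixed compact slab, the residual computation for $h(s,x)=T_s\psi'(x)+c_0e^{-\kappa_0 s}$ is correct, and comparison for \eqref{evo} is available because (H3) gives a one-sided Lipschitz bound in $u$; the final identification of $u_-$ as a fixed point via Proposition \ref{prre-1}(ii) and \ref{prre-2}(ii) is fine.

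The one step you must repair is the a priori $C^0$ bound. Appendix B does not provide a bound on $\|T_t0\|_{C^0}$, and it certainly does not ``produce a fixed point of $T_t$''; invoking a fixed point at this stage is perilously close to assuming the existence of the stationary solution whose existence is part of the statement. What $0\in\mathcal{C}_H$ actually supplies, via \eqref{def-cv} and \cite{CIPP}, is some $a\in\R$ and a Lipschitz viscosity solution $v$ of the frozen equation $H(x,a,d_xv(x))=0$. The missing (short) argument is: for $C$ large enough that $v+C\ge a$ and $v-C\le a$, the functions $v\pm C$ are, by (H3) monotonicity, time-independent viscosity super- and subsolutions of \eqref{evo}; choosing $C$ also so large that $v-C\le\varphi\le v+C$ and applying comparison (equivalently, monotonicity of $T_t$ together with $T_t(v+C)\le v+C$ and $T_t(v-C)\ge v-C$) traps $T_t\varphi$ between $v-C$ and $v+C$ for all $t\ge0$. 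With this inserted (or with the bound simply imported from \cite{SWY} along with the equi-Lipschitz estimate you already cite), your scheme is complete. Note finally that your constants $\kappa$ and $\kappa_0$ depend on $\varphi$ through $R$; that is harmless for this proposition, but it is exactly why the paper later needs Lemma \ref{com-c} to manufacture a $\varphi$-independent compact set $K_0$ and hence a rate $\lambda(H)$ depending only on $H$ in Theorem \ref{main}.
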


\begin{remark}
By \cite{SWY}, for any $c\in \mathcal{C}_H$,  the properties listed in Proposition \ref{prre-2} hold if one replace $T_{t}$ and $H$ with $T^c_t$ and $H-c$ respectively. Without loss of generality, we always assume $0\in\mathcal{C}_H$ and our results also apply to any $c\in\mathcal{C}_H$.
\end{remark}

\section{A key lemma}
In this section, we shall formulate a lemma which will be useful in our proof of the main theorem. We would like to mention that it is a complete version of a result obtained in \cite[Lemma 6.5]{SWY}. Let $H:T^{\ast}M\times\R\rightarrow\R$ be a $C^{r},r\geq3$ Hamiltonian satisfying (H1)-(H3), $L$ is the Lagrangian associated to $H$. For a fixed semiconcave function $u$, we define
\begin{itemize}
  \item a $C^{0}$ Lagrangian $l_{u}:TM\rightarrow\mathbb{R}$ by
        \begin{equation}\label{ml}
        l_{u}(x,\dot{x}):=L(x,u(x),\dot{x})-\partial u(x,\dot{x})
        \end{equation}

  \item a homeomorphism $\mathcal{L}_{u}:T^{\ast}M\rightarrow TM$ by
        \begin{equation}\label{gen-leg}
        \mathcal{L}_{u}(x,p):=\pi\mathcal{L}(x,u(x),p)=(x,\frac{\partial H}{\partial p}(x,u(x),p)),
        \end{equation}
        and its inverse $\mathcal{L}^{-1}_{u}(x,\dot{x})=(x,\frac{\partial L}{\partial\dot{x}}(x,u(x),\dot{x}))$.
\end{itemize}
and the following theorem holds:
\begin{lemma}\label{key}
Given $\beta>0$, there are positive constants $\alpha,\Delta$ only depending on $L,u$ and $\beta$ such that
\begin{itemize}
  \item if $d_{TM}((x,\dot{x}),\mathcal{L}_{u}\bar{\mathcal{G}}_{u})\leq\Delta$, then
        \begin{equation*}
        l_{u}(x, \dot{x})\geq\alpha\cdot d_{TM}((x,\dot{x}),\mathcal{L}_{u}\bar{\mathcal{G}}_{u})^{2}-\min_{p\in D^{*}u(x)}H(x,u(x),p)
        \end{equation*}

  \item if $d_{TM}((x,\dot{x}),\mathcal{L}_{u}\bar{\mathcal{G}}_{u})>\Delta$, then
        \begin{equation*}
        l_{u}(x, \dot{x})\geq\beta-\min_{p\in D^{*}u(x)}H(x,u(x),p).
        \end{equation*}
\end{itemize}
\end{lemma}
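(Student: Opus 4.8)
The plan is to reduce everything to a pointwise convexity estimate in the fibers of $TM$. Fix $x\in M$. The key observation is that, by definition \eqref{ml}, $l_u(x,\dot x) = L(x,u(x),\dot x) - \partial u(x,\dot x)$, and by the directional-derivative representation \eqref{dd representation} in Lemma \ref{dd} we have $\partial u(x,\dot x) = \min_{p\in D^*u(x)}\langle p,\dot x\rangle$. Therefore
\begin{equation*}
l_u(x,\dot x) = L(x,u(x),\dot x) - \min_{p\in D^*u(x)}\langle p,\dot x\rangle = \max_{p\in D^*u(x)}\bigl(L(x,u(x),\dot x) - \langle p,\dot x\rangle\bigr).
\end{equation*}
Now for a \emph{fixed} $p\in D^*u(x)$, the Fenchel inequality gives $L(x,u(x),\dot x) - \langle p,\dot x\rangle \geq -H(x,u(x),p)$, with equality precisely when $\dot x = \frac{\partial H}{\partial p}(x,u(x),p)$, i.e. when $(x,\dot x) = \mathcal{L}_u(x,p)$. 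So the first step is to quantify how far $L(x,u(x),\dot x) - \langle p,\dot x\rangle + H(x,u(x),p)$ lies above zero as a function of the distance from $\dot x$ to $\frac{\partial H}{\partial p}(x,u(x),p)$, using strict convexity (L1).

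The second step is the quantitative convexity estimate. Consider $F_p(\dot x) := L(x,u(x),\dot x) - \langle p,\dot x\rangle$. This is $C^2$ in $\dot x$ with $\nabla F_p$ vanishing at $\dot x_p := \frac{\partial H}{\partial p}(x,u(x),p)$ and Hessian $\frac{\partial^2 L}{\partial\dot x^2}(x,u(x),\dot x)$ which is positive definite by (L1). Since $M$ is closed and $u$ is Lipschitz, the relevant momenta $p$ range over the compact set $\bar{\mathcal{G}}_u$, hence the base points $(x,u(x))$ range over a compact set and the velocities $\dot x_p$ over a compact set too; a local uniform lower bound $2\alpha_0>0$ on the smallest eigenvalue of $\frac{\partial^2 L}{\partial\dot x^2}$ holds on a suitable compact neighborhood of all these points. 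Taylor's theorem with integral remainder then yields, for $\dot x$ in that neighborhood, $F_p(\dot x) \geq F_p(\dot x_p) + \alpha_0\,|\dot x - \dot x_p|_x^2 = -H(x,u(x),p) + \alpha_0\,|\dot x - \dot x_p|_x^2$. Taking the maximum over $p\in D^*u(x)$ on the left (using the representation above) and choosing on the right the particular $p^*$ achieving $\min_{p\in D^*u(x)}H(x,u(x),p)$, together with the fact that $|\dot x - \dot x_{p^*}|_x = |\dot x - \pi_{\text{fiber}}\mathcal{L}_u(x,p^*)|_x \geq d_{TM}((x,\dot x),\mathcal{L}_u\bar{\mathcal{G}}_u)$ when $(x,\dot x)$ and $\mathcal{L}_u(x,p^*)$ share the base point $x$ (by \eqref{dtm-1}), gives the first alternative with $\alpha = \alpha_0$, valid whenever $d_{TM}((x,\dot x),\mathcal{L}_u\bar{\mathcal{G}}_u) \leq \Delta$ for a small enough $\Delta$ controlling the size of the neighborhood.

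The third step handles the far regime $d_{TM}((x,\dot x),\mathcal{L}_u\bar{\mathcal{G}}_u) > \Delta$. Here I would invoke superlinearity (L2): for $p$ in the compact set of reachable differentials, $L(x,u(x),\dot x) - \langle p,\dot x\rangle + H(x,u(x),p) \to +\infty$ uniformly as $|\dot x|_x \to \infty$, so it exceeds $\beta$ outside a large ball $B_R$ in the fiber. Inside $B_R$ but at fiber-distance $>\Delta$ from $\dot x_{p^*}$, one uses continuity and compactness: the function $(x,\dot x, p)\mapsto F_p(\dot x) + H(x,u(x),p)$ is continuous and strictly positive off its zero set $\{\dot x = \dot x_p\}$, so on the compact set where $|\dot x|_x\leq R$ and $|\dot x - \dot x_p|_x \geq \Delta$ it is bounded below by some $\beta' > 0$; after possibly shrinking $\Delta$ (and noting the constants interact, so one should fix $\beta$ first, then $\Delta$, then $\alpha$) one arranges $\beta' \geq \beta$. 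Combining, $l_u(x,\dot x) + \min_{p}H(x,u(x),p) \geq \beta$ in this regime.

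The main obstacle is bookkeeping the order of quantifiers and the dependence of $\mathcal{D}^*u(x)$ on $x$: the set $D^*u(x)$ is only upper semicontinuous in $x$, and $l_u$ is merely $C^0$, so I cannot differentiate $l_u$ directly — all the convexity must be extracted from $L$ (which is smooth) uniformly over the compact parameter set $\bar{\mathcal{G}}_u = \pi\bar{\mathcal{J}}^1_u$, and then transferred to $l_u$ via the $\max$-representation. Care is also needed because the estimate must be \emph{uniform in $x$}: this is where closedness of $M$ and compactness of $\bar{\mathcal{G}}_u$ are essential, and one should phrase the Taylor and compactness arguments on a fixed tubular neighborhood of $\mathcal{L}_u\bar{\mathcal{G}}_u$ in $TM$ rather than fiberwise with $x$-dependent constants.
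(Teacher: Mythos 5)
Your reduction to the fiberwise convexity estimate is sound, and in the near regime your argument is essentially the paper's: the paper works with $F(x,\dot x)=L(x,u(x),\dot x)-\langle d_xu(x),\dot x\rangle+H(x,u(x),d_xu(x))$ at differentiable points, gets a uniform Hessian lower bound $2\alpha$ on a compact fiberwise neighborhood of $\mathcal{L}_u\bar{\mathcal{G}}_u$, deduces $F\geq\alpha|\dot x-\tfrac{\partial H}{\partial p}(x,u(x),d_xu(x))|_x^2\geq\alpha\, d_{TM}((x,\dot x),\mathcal{L}_u\bar{\mathcal{G}}_u)^2$, and then passes to arbitrary $x$ by approximating the minimizing $p_0\in D^*u(x)$ with gradients at nearby differentiable points; your use of the max-representation $l_u(x,\dot x)=\max_{p\in D^*u(x)}\{L(x,u(x),\dot x)-\langle p,\dot x\rangle\}$ from the start reaches the same estimate a bit more directly, and the inequality $|\dot x-\dot x_{p^*}|_x\geq d_{TM}((x,\dot x),\mathcal{L}_u\bar{\mathcal{G}}_u)$ via \eqref{dtm-1} is used in exactly the same way. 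One remark: you do not need $\Delta$ ``small enough'' here; for any fixed $\Delta$ the closed $\Delta$-neighborhood of the compact set $\mathcal{L}_u\bar{\mathcal{G}}_u$ is compact, so the uniform bound $\tfrac{\partial^2 L}{\partial\dot x^2}\geq 2\alpha\,\mathrm{Id}$ (with $\alpha$ depending on $\Delta$) holds there and along the relevant fiber segments.

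The genuine gap is in your third step. Having chosen $R$ by superlinearity, you are left with the intermediate compact region $\{|\dot x|_x\leq R,\ |\dot x-\dot x_p|_x\geq\Delta\}$ where you only know strict positivity, giving some $\beta'>0$; but $\beta'$ is determined by the fixed function $L(x,u(x),\dot x)-\langle p,\dot x\rangle+H(x,u(x),p)$ on that region and is capped by its actual values there, so no choice of $\Delta$ can force $\beta'\geq\beta$ for an arbitrarily prescribed $\beta$ --- and shrinking $\Delta$, as you propose, goes in the wrong direction: it enlarges the region and can only decrease $\beta'$. The correct resolution, which is what the paper does, is to make the far regime coincide with the superlinearity regime by taking $\Delta$ \emph{large}: since the reachable momenta are bounded by the Lipschitz constant of $u$ (and $|H(x,u(x),p)|$ is bounded on $\bar{\mathcal{G}}_u$), superlinearity (L2) gives $\Delta_0$ such that $L(x,u(x),\dot x)-\langle p,\dot x\rangle+H(x,u(x),p)\geq\beta$ whenever $|\dot x|_x\geq\Delta_0$, uniformly in $p\in D^*u(x)$; then, because both $\mathcal{L}_u\bar{\mathcal{G}}_u$ and $\{|\dot x|_x\leq\Delta_0\}$ are compact, one chooses $\Delta$ so large that $\{|\dot x|_x\leq\Delta_0\}$ lies inside the $\Delta$-neighborhood of $\mathcal{L}_u\bar{\mathcal{G}}_u$, whence $d_{TM}((x,\dot x),\mathcal{L}_u\bar{\mathcal{G}}_u)>\Delta$ forces $|\dot x|_x\geq\Delta_0$ and the intermediate region you try to handle is empty. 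With that correction (and noting, as above, that the near-regime constant $\alpha$ may simply be taken for this large $\Delta$), your argument matches the paper's proof.
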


\begin{proof}
For $x\in\mathcal{D}_{u}$, we define
\begin{equation*}
F(x,\dot{x}):=L(x,u(x),\dot{x})-\langle d_{x}u(x),\dot{x}\rangle+H(x,u(x),d_{x}u(x)).
\end{equation*}
By convexity and duality of $L$ and $H$, $F(x,\dot{x})\geq0$. By definition, $F$ is smooth on each tangent space $T_{x}M,x\in\mathcal{D}_{u}$ and we have
\begin{equation}\label{derivative}
\frac{\partial F}{\partial\dot{x}}=\frac{\partial
L}{\partial\dot{x}}(x,u(x),\dot{x})-d_{x}u(x),\,\,\,\,\frac{\partial^{2}F}{\partial\dot{x}^{2}}=\frac{\partial^{2}L}{\partial\dot{x}^{2}}(x,u(x),\dot{x}),
\end{equation}
this implies that $F$ is strictly convex with respect to $\dot{x}$ and $F(x,\dot{x})=0$ if and only if $(x,\dot{x})=\mathcal{L}_{u}(x,d_{x}u(x))$ or $\dot{x}=\frac{\partial H}{\partial p}(x,u(x),d_{x}u(x))$.

Since $u$ is Lipschitz continuous on $M$, there exists positive constant $B$ such that if $x\in\mathcal{D}_{u}$,
\begin{equation*}
|d_{x}u(x)|_{x}\leq B_{0},\,\,\,|H(x,u(x),d_{x}u(x))|\leq B_{0}.
\end{equation*}
Thus by the compactness of $M$ and (L2)-(L3), for a given $\beta>0$ and any $x\in\mathcal{D}_{u}$, there exists constant $\Delta_{0}:=\Delta_{0}(L,u,\beta)>0$ such that
\begin{equation}\label{ieq:3}
F(x,\dot{x})\geq L(x,u(x),\dot{x})-B(|\dot{x}|_{x}+1)\geq\beta\,\,\,\,\text{ if }\,\,\|\dot{x}\|_{x}\geq\Delta_{0}.
\end{equation}
Since $\mathcal{L}_{u}\bar{\mathcal{G}}_{u}$ and $\{(x,\dot{x}):|\dot{x}|_{x}\leq\Delta_{0}\}$ are compact subsets of $TM$, we could choose $\Delta:=\Delta(L,u,\beta)>0$ large enough such that
$$
\{(x,\dot{x}):|\dot{x}|_{x}\leq\Delta_{0}\}\subseteq\{(x,\dot{x}):d_{TM}((x,\dot{x}),\mathcal{L}_{u}\bar{\mathcal{G}}_{u})\leq\Delta-1\}.
$$
Thus if $x\in\mathcal{D}_{u},d_{TM}((x,\dot{x}),\mathcal{L}_{u}\bar{\mathcal{G}}_{u})>\Delta$, we have $|\dot{x}|_{x}\geq\Delta_{0}$ and by definition,
\begin{equation}\label{3}
l_{u}(x,\dot{x})=F(x,\dot{x})-H(x,u(x),\partial_{x}u(x))\geq\beta-H(x,u(x),d_{x}u(x)).
\end{equation}

Again we notice that $|\frac{\partial H}{\partial p}(x,u(x),d_{x}u(x))|_{x}$ is bounded above and the set  \[\{(x,\dot{x}):d_{TM}((x,\dot{x}),\mathcal{L}_{u}\bar{\mathcal{G}}_{u})\leq\Delta\}\] is a compact subset of $TM$, there exists $\Delta_{1}:=\Delta_{1}(L,u,\beta)>0$ such that
\begin{equation*}
\{(x,\dot{x}):d_{TM}((x,\dot{x}),\mathcal{L}_{u}\bar{\mathcal{G}}_{u})\leq\Delta+1\}\subseteq\mathcal{S}:=\{(x,\dot{x}):|\dot{x}-\frac{\partial H}{\partial p}(x,u(x),d_{x}u(x))|_{x}\leq\Delta_{1}\}.
\end{equation*}
By (L1) and (\ref{derivative}), for any $(x,\dot{x})\in \mathcal{S}$, there exists $\alpha>0$ such that
$$
\frac{\partial^2F}{\partial\dot{x}^2}\geq2\alpha\cdot Id
$$
with respect to $|\cdot|_{x}$ on each tangent space $T_{x}M$. Hence by \eqref{dtm-1}, it follows that for $x\in\mathcal{D}_{u}$ and $(x,\dot{x})\in \mathcal{S}$, there holds
\begin{align*}
F(x,\dot{x})\geq&\alpha\cdot|\dot{x}-\frac{\partial H}{\partial p}(x,u(x),d_x u(x))|_{x}^2\geq\alpha\cdot d_{TM}((x,\dot{x}),\mathcal{L}_{u}\bar{\mathcal{G}}_{u})^{2},
\end{align*}which implies, from the definition of $l_{u}$,
\begin{align*}
l_{u}(x,\dot{x})\geq&\alpha\cdot d_{TM}((x,\dot{x}),\mathcal{L}_{u}\bar{\mathcal{G}}_{u})^{2}-H(x,u(x),d_{x}u(x)).
\end{align*}

Finally, for any $(x,\dot{x})\in TM$ with $d_{TM}((x,\dot{x}),\mathcal{L}_{u}\bar{\mathcal{G}}_{u})\leq\Delta$, the compactness of $D^{\ast}u(x)$ implies that there exists $p_{0}\in D^{*}u(x)$ such that
$$
H(x,u(x),p_{0})=\min_{p\in D^{*}u(x)}H(x,u(x),p).
$$
We choose a sequence $(x_{k},\dot{x}_{k})\rightarrow(x,\dot{x})$ such that $x_{k}\in\mathcal{D}_{u}$ and $d_{x}u(x_{k})\rightarrow p_{0}$, then
\begin{align*}
l_{u}(x,\dot{x})=&\max_{p\in D^{*}u(x)}\{L(x,u(x),\dot{x})-\langle p,\dot{x}\rangle\}\\
\geq&L(x,u(x),\dot{x})-\langle p_{0},\dot{x}\rangle\\
=&\lim_{k\rightarrow\infty}l_{u}(x_{k},\dot{x}_{k})\\
\geq&\lim_{k\rightarrow\infty}\alpha\cdot d_{TM}((x_{k},\dot{x}_{k}),\mathcal{L}_{u}\bar{\mathcal{G}}_{u})^{2}-H(x_{k},u(x_{k}),d_{x}u(x_{k}))\\
=&\alpha\cdot d_{TM}((x,\dot{x}),\mathcal{L}_{u}\bar{\mathcal{G}}_{u})^{2}-H(x_{k},u(x_{k}),p_{0})\\
=&\alpha\cdot d_{TM}((x,\dot{x}),\mathcal{L}_{u}\bar{\mathcal{G}}_{u})^{2}-\min_{p\in D^{*}u(x)}H(x,u(x),p),
\end{align*}
where the second inequality holds since for $k$ sufficiently large, $(x_k,\dot{x}_k)\in \mathcal{S}$. For the case with  \[d_{TM}((x,\dot{x}),\mathcal{L}_{u}\bar{\mathcal{G}}_{u})\geq\Delta,\] by the same argument above, we have
\begin{equation*}
l_{u}(x,\dot{x})=\lim_{k\rightarrow\infty}l_{u}(x_{k},\dot{x}_{k})
\geq\lim_{k\rightarrow\infty}\beta-H(x_{k},u(x_{k}),d_{x}u(x_{k}))
=\beta-\min_{p\in D^{*}u(x)}H(x,u(x),p),
\end{equation*}
where the inequality holds since for $k$ sufficiently large, $d_{TM}((x_{k},\dot{x}_{k}),\mathcal{L}_{u}^{-1}\bar{\mathcal{G}}_{u})>\Delta-1$, thus $|\dot{x}_{k}|_{x_{k}}\geq\Delta_{0}$. This completes the proof.\end{proof}

\section{Proof of the main theorem}
This section is devoted to a proof of  Theorem \ref{main}, we will always assume that $H=H(x,u,p)$ is a $C^{r},r\geq2$ contact Hamiltonian satisfying (H1)-(H3) and $L$ is the Lagrangian associated to $H$. We use $u_{-}$ to denote the viscosity solution of (\ref{sta}) and the uniqueness of $u_{-}$ is settled in Appendix A. Thus to prove Theorem \ref{main}, we only need to show that the inequalities \eqref{ecs} and \eqref{ecj} hold.

\subsection{Step I}
We shall give definition of $\lambda(H)$ and make some constructions in this step. Let $I$ be a compact interval and $B>0$, by (H1)-(H3), the set
\begin{equation}\label{cps}
K_{I,B}:=\{(x,u,p)|u\in I,|H(x,u,p)|\leq B\}
\end{equation}
is a compact subset of $T^{\ast}M\times\R$. Before getting into the proof, we shall prove a lemma which is used in the following construction.
\begin{lemma}\label{com-c}
There is a compact subset $K_{0}$ of $T^{\ast}M\times\R$ only depending on $H$ such that for any $\varphi\in C^{0}(M,\R)$, there is $t(\varphi)>0$ satisfying
\begin{equation}
\cup_{t\geq t(\varphi)}\bar{\mathcal{J}}^{1}_{T_{t}\varphi}\cup\bar{\mathcal{J}}^{1}_{u_{-}}\subseteq K_{0}.
\end{equation}
\end{lemma}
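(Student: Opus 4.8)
The plan is to realize $K_{0}$ as one of the compact sets $K_{I,B}$ from \eqref{cps}, with the interval $I$ and the constant $B$ depending only on $H$. The $u$-coordinate of the 1-graphs will be controlled by the convergence $T_{t}\varphi\to u_{-}$, and the $p$-coordinate by the dynamics along minimizers together with the decay estimate \eqref{eb}. \emph{Step 1 (a uniform $C^{0}$ bound).} By Appendix A, \eqref{sta} has a unique viscosity solution $u_{-}$, so $R_{0}:=\|u_{-}\|_{C^{0}}+1$ depends only on $H$; set $I:=[-R_{0},R_{0}]$. Since $T_{s}\varphi\to u_{-}$ in $C^{0}$ (Proposition \ref{prre-3}), there is $t(\varphi)>1$ with $\|T_{s}\varphi\|_{C^{0}}\le R_{0}$ for every $s\ge t(\varphi)-1$. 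Hence the $u$-coordinate of every point of $\bar{\mathcal{J}}^{1}_{u_{-}}$ and of $\bar{\mathcal{J}}^{1}_{T_{t}\varphi}$ (for $t\ge t(\varphi)$) lies in $I$, and it remains only to bound $H$ on $\bar{\mathcal{G}}_{T_{t}\varphi}$ and on $\bar{\mathcal{G}}_{u_{-}}$.

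\emph{Step 2 (the dynamical estimate).} Fix $t\ge t(\varphi)$ and $x\in M$, and let $\gm:[0,t]\to M$ be a minimizer of $T_{t}\varphi(x)$ (Propositions \ref{prre-1}(iii) and \ref{prre-2}(i)). By the semigroup property, $\gm|_{[t-1,t]}$ is a minimizer of the one-unit-time problem with initial datum $T_{t-1}\varphi$, so $\int_{t-1}^{t}L(\gm,T_{\tau}\varphi(\gm),\dot{\gm})\,d\tau=T_{t}\varphi(x)-T_{t-1}\varphi(\gm(t-1))$ has absolute value $\le 2R_{0}$. By (L2) and compactness of $M\times I$ there is $C_{\ast}=C_{\ast}(H)$ with $L(y,v,\dot{y})\ge|\dot{y}|_{y}-C_{\ast}$ whenever $v\in I$; since $T_{\tau}\varphi(\gm(\tau))\in I$ on $[t-1,t]$, this yields $\int_{t-1}^{t}|\dot{\gm}(\tau)|_{\gm(\tau)}\,d\tau\le 2R_{0}+C_{\ast}=:\ell_{0}$, a constant depending only on $H$. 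By the mean value theorem pick $\tau^{\ast}\in[t-1,t]$ with $|\dot{\gm}(\tau^{\ast})|_{\gm(\tau^{\ast})}\le\ell_{0}$. By Proposition \ref{prre-2}(i) the lift $(x(\tau),u(\tau),p(\tau)):=\mathcal{L}^{-1}(\gm(\tau),T_{\tau}\varphi(\gm(\tau)),\dot{\gm}(\tau))$ solves \eqref{hjech} on $[t-1,t]$; since $\mathcal{L}$ is a diffeomorphism and $\{(y,v,\dot{y}):v\in I,\ |\dot{y}|_{y}\le\ell_{0}\}$ is compact, there is $B_{0}=B_{0}(H)$ with $|H(x(\tau^{\ast}),u(\tau^{\ast}),p(\tau^{\ast}))|\le B_{0}$. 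As $|H|$ is non-increasing along forward orbits of \eqref{hjech} by \eqref{eb}, and $x(t)=x$, $u(t)=T_{t}\varphi(x)$, we get $|H(x,T_{t}\varphi(x),p(t))|\le B_{0}$. Finally, $T_{t}\varphi$ is semiconcave (Remark \ref{dd-r}), so at any $x\in\mathcal{D}_{T_{t}\varphi}$ its differential $d_{x}T_{t}\varphi(x)$ equals the endpoint momentum $p(t)$ of a minimizer reaching $x$; hence $(x,T_{t}\varphi(x),d_{x}T_{t}\varphi(x))\in K_{I,B_{0}}$ for every such $x$.

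\emph{Step 3 (closure and the stationary solution).} Since $K_{I,B_{0}}$ is closed, taking the closure in \eqref{def-j} gives $\bar{\mathcal{J}}^{1}_{T_{t}\varphi}\subseteq K_{I,B_{0}}$ for all $t\ge t(\varphi)$. For $u_{-}$: at each $x\in\mathcal{D}_{u_{-}}$ one has $H(x,u_{-}(x),d_{x}u_{-}(x))=0$ (a differentiable viscosity solution of \eqref{sta} solves it classically) and $u_{-}(x)\in I$, so passing to the limit over reachable differentials yields $\bar{\mathcal{J}}^{1}_{u_{-}}\subseteq K_{I,0}\subseteq K_{I,B_{0}}$. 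Setting $K_{0}:=K_{I,B_{0}}$, which is compact by (H1)--(H3) and depends only on $H$, completes the proof.

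\emph{Main obstacle.} The crux is the momentum bound in Step 2: controlling $d_{x}T_{t}\varphi(x)$ by a quantity independent of $\varphi$. One cannot hope to bound it at a single prescribed time; the device is to first extract, from the integrated action estimate and the mean value theorem, an intermediate time $\tau^{\ast}$ at which the velocity—hence the energy $|H|$—is bounded in terms of $H$-data alone, and then transport that bound forward using the monotone decay \eqref{eb} of $|H|$ along the contact flow. This last step is precisely where (H3) enters, and it is also the estimate that must be sharpened to obtain the quantitative rates in Theorem \ref{main}.
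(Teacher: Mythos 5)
Your proposal is correct and follows essentially the same route as the paper: bound the $u$-coordinate by $\|u_{-}\|_{C^{0}}+1$ via the $C^{0}$-convergence and uniqueness of $u_{-}$, use the action identity on a unit time interval plus superlinearity and the mean value theorem to find an intermediate time with bounded velocity (hence bounded $|H|$), and then propagate that energy bound to the endpoint via the monotone decay \eqref{eb}, landing in a compact set of the form $K_{I,B}$. The only cosmetic differences are that you invoke a uniform superlinear lower bound for $v\in I$ where the paper uses (L3)-monotonicity to compare with $L(\cdot,a,\cdot)$, and that you spell out the closure step and the $\bar{\mathcal{J}}^{1}_{u_{-}}\subseteq K_{I,0}$ inclusion, which the paper leaves implicit.
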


\begin{proof}
Proposition \ref{prre-3} implies that for any $\varphi\in C^{0}(M,\R)$, there is $t(\varphi)>0$ such that for $t\geq t(\varphi)$, \begin{equation}\label{def-t}
\|T_{t}\varphi-u_{-}\|_{C^{0}}\leq1.
\end{equation}
Thus by uniqueness of $u_{-}$, for any $t\geq t(\varphi)$, $\|T_{t}\varphi\|_{C^{0}}\leq\|u_{-}\|_{C^{0}}+1=a:=a(H)$. By the assumption (L2), there is $b:=b(H)>0$ such that $L(x,a,\dot{x})\geq|\dot{x}|_{x}-b$. It follows that for $t\geq t(\varphi)+1$ and any minimizer $\gm_{x,t}$,
\begin{align*}
&T_{t}\varphi(\gm_{x,t}(t))-T_{t-1}\varphi(\gm_{x,t}(t-1))\\
=&\int^{t}_{t-1} L(\gm_{x,t}(\tau),T_{\tau}\varphi(\gm_{x,t}(\tau),\tau),\dot{\gm}_{x,t}(\tau))d\tau\\
\geq&\int^{t}_{t-1}L(\gm_{x,t}(\tau),a,\dot{\gm}_{x,t}(\tau))d\tau\\
\geq&\int^{t}_{t-1}|\dot{\gm}_{x,t}(\tau)|_{\gm_{x,t}(\tau)}d\tau-b,\\
\end{align*}
where the second inequality is owing to the monotonicity assumption. However, by definition of $t(\varphi)$, we have
\begin{equation*}
|T_{t}\varphi(\gm_{x,t}(t))-T_{t-1}\varphi(\gm_{x,t}(t-1))|\leq 2a.
\end{equation*}
Hence, one can find $t_{0}\in[t-1,t]$ such that $|\dot{\gm}(t_0)|\leq2a+b$, this implies that there is $B:=B(H)$, for $z_{0}=\mathcal{L}^{-1}(\gm_{x,t}(t_{0}),T_{t_0}\varphi(\gm_{x,t}(t_{0})),\dot{\gm}_{x,t}(t_{0}))$, $|H(z_{0})|\leq B$. Thus by \eqref{eb}, $|H(x,T_{t}\varphi(x),p)|=|H(\Phi^{t-t_{0}}_{H}z_{0})|<|H(z_{0})|\leq B$. Let $I=[-a,a]$, we deduce that for any $t\geq t(\varphi)$ and any minimizer $\gm_{x,t}$ of $T_{t}\varphi(x)$, \begin{equation}
\mathcal{L}^{-1}(x,T_{t}\varphi(x),\dot{\gm}_{x,t}(t))\in\{(x,u,p)|u\in I,|H(x,u,p)|\leq B\}:=K_{0}(H)
\end{equation}
is a compact subset of $T^{\ast}M\times\R$ only depending on $H$.
\end{proof}

We make the following definitions which are used in the next three steps:
\begin{equation}\label{hbaa}
\begin{split}
&\lambda(H):=\inf\{\frac{\partial H}{\partial u}(x,u,p)|(x,u,p)\in K_{0}\}>0,\\
&\bar{H}(x,u,p):=\lambda(u-u_{-}(x))+H(x,u_{-}(x),p),\\
&\bar{L}(x,u,\dot{x}):=\lambda(u_{-}(x)-u)+L(x,u_{-}(x),\dot{x}).
\end{split}
\end{equation}
We denote by $T_{t},\bar{T}_{t}$ the solution semigroups associated to $L$ and $\bar{L}$ respectively.

\subsection{Step II}
In this step, we prove \eqref{ecs}. For a given $\varphi\in C^{0}(M,\mathbb{R})$, let \[c:=\max\{\|\varphi\|_{C^{0}},\|u_{-}\|_{C^{0}}\}.\] In this step, we shall regard $\pm c$ as constant functions on $M$ and use the notation $T_{t}[\pm c]$ to denote their images under the functional operator $T_{t}$. On one hand, by Proposition \ref{prre-1} and \ref{prre-2}, it is clear that
\begin{align}\label{cp1}
T_{t}[-c]\leq& T_{t}\varphi\leq T_{t}[c]\\
T_{t}[-c]\leq& u_{-}\leq T_{t}[c].
\end{align}
On the other hand, we have
\begin{lemma}
Let $t_{c}=\max\{t(c),t(-c)\}>0$ where $t(\cdot)$ is defined in \eqref{def-t}, then for $t\geq0$,
\begin{align}\label{cp2}
\bar{T}_{t}\circ T_{t_{c}}[-c]\leq& T_{t+t_{c}}[-c],\quad T_{t+t_{c}}[c]\leq\bar{T}_{t}\circ T_{t_{c}}[c].
\end{align}
\end{lemma}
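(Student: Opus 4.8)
The plan is to prove the two inequalities in \eqref{cp2} by comparison of semigroups, exploiting the fact that $\bar{L}$ is a linearized/frozen version of $L$ along $u_{-}$ and that $\partial L/\partial u<0$. I will treat the second inequality $T_{t+t_c}[c]\leq\bar{T}_t\circ T_{t_c}[c]$ in detail; the first follows by a symmetric argument with reversed inequalities.

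First I would set $\psi:=T_{t_c}[c]$, so that by the semigroup property $T_{t+t_c}[c]=T_t\psi$ and the claim becomes $T_t\psi\leq\bar{T}_t\psi$. The key point is that for $t\geq t_c$ the 1-graph of $T_t[c]$ lies in the compact set $K_0$ of Lemma \ref{com-c}, so along any minimizing curve the relevant values of $u$ stay in the range where $\lambda=\lambda(H)=\inf_{K_0}\partial H/\partial u$ is a lower bound for $\partial H/\partial u$, equivalently (passing to the Lagrangian side via the Legendre transform and (L3)) $\lambda$ is a lower bound for $-\partial L/\partial u$ on the corresponding compact set of $TM\times\R$. Concretely, I would show: whenever $v(\tau)$ and $w(\tau)$ are real numbers with $v(\tau)\geq w(\tau)$ lying in the relevant compact range, and $v(\tau)$ solves (along a fixed curve $\gm$) the equation coming from the definition of $T_\tau$ while $w$ is frozen at $u_{-}$, one has $L(\gm,v,\dot\gm)-L(\gm,u_{-},\dot\gm)\leq -\lambda(v-u_{-})$, because by the mean value theorem $L(\gm,v,\dot\gm)-L(\gm,u_{-},\dot\gm)=\frac{\partial L}{\partial u}(\gm,\xi,\dot\gm)(v-u_{-})\leq-\lambda(v-u_{-})$ when $v\geq u_{-}$, using (L3) and the definition of $\lambda$ via $K_0$.

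The mechanism of the comparison is then a Gronwall-type argument. Fix $x$ and $t$, and let $\gm:[0,t]\to M$ with $\gm(t)=x$ be a minimizer for $\bar{T}_t\psi(x)$, so that $\bar{T}_t\psi(x)=\psi(\gm(0))+\int_0^t\bar{L}(\gm(\tau),\bar{T}_\tau\psi(\gm(\tau)),\dot\gm(\tau))\,d\tau$. Using the very same curve $\gm$ as a (non-optimal) competitor for $T_t\psi(x)$, write $h(\tau):=T_\tau\psi(\gm(\tau))-\bar{T}_\tau\psi(\gm(\tau))$; both $T_\tau\psi$ and $\bar{T}_\tau\psi$ satisfy the calibration identity along $\gm$ with Lagrangians $L$ and $\bar{L}$ respectively, so differentiating (or taking finite differences over $[\tau,\tau+d\tau]$ and using the implicit definition \eqref{uxt}) gives $\dot h(\tau)\leq L(\gm,T_\tau\psi,\dot\gm)-\bar{L}(\gm,\bar{T}_\tau\psi,\dot\gm)$. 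Now $\bar{L}(\gm,\bar{T}_\tau\psi,\dot\gm)=\lambda(u_{-}(\gm)-\bar{T}_\tau\psi)+L(\gm,u_{-}(\gm),\dot\gm)$, and by the mean value estimate above applied at $v=T_\tau\psi$ (which stays in the compact range by Lemma \ref{com-c}, since $t_c\geq t(c)$), one gets $\dot h(\tau)\leq -\lambda\,h(\tau)$ pointwise wherever $h(\tau)\geq0$. Since $h(0)=0$, Gronwall's inequality forces $h(\tau)\leq0$ for all $\tau\in[0,t]$, in particular $h(t)=T_t\psi(x)-\bar{T}_t\psi(x)\leq0$, which is the desired inequality; then use Proposition \ref{prre-1}(iv) and the choice $\psi=T_{t_c}[c]$ to conclude. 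The first inequality of \eqref{cp2} is obtained identically, taking $\gm$ a minimizer for $T_{t+t_c}[-c]$, defining $h$ with the opposite sign, and using $-\partial L/\partial u\geq\lambda$ on the relevant range together with $T_\tau[-c]\leq u_{-}$ (valid since $t\geq0$ and $t_c\geq t(-c)$, via \eqref{cp1}).

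The main obstacle I anticipate is making the differential inequality $\dot h\leq -\lambda h$ rigorous, since $T_\tau\psi(\gm(\tau))$ is only known to be Lipschitz in $\tau$ and the definition \eqref{uxt} is implicit; the clean way is to avoid pointwise derivatives altogether and run the estimate in integral form, bounding $T_{\tau+s}\psi(\gm(\tau+s))$ from above by plugging the fixed competitor curve $\gm$ into \eqref{uxt} over $[\tau,\tau+s]$, subtracting the exact calibration identity for $\bar{T}$, and iterating over a partition to get $h(t)\leq\int_0^t -\lambda\, h(\tau)\,d\tau$ directly. A secondary point requiring care is confirming that both $T_\tau\psi$ along $\gm$ and $u_{-}$ along $\gm$ stay inside the compact set on which $\lambda$ is the genuine lower bound for $-\partial L/\partial u$; this is exactly what Lemma \ref{com-c} (applied with $\varphi=c$, whence $t_c\ge t(c)$) and Proposition \ref{prre-3} are there to supply, together with the observation that $\bar{L}$ shares the bound since it is built from $L(\cdot,u_{-}(\cdot),\cdot)$.
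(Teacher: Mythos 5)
Your argument is essentially the paper's: compare the two semigroups along a minimizing curve, use the calibration identity for one and the implicit definition \eqref{uxt} as an upper bound for the other, invoke the mean-value bound $\partial L/\partial u\le-\lambda$ on the compact set of Lemma \ref{com-c} together with the sign condition $T_\tau\psi\ge u_-$ (resp.\ $\le u_-$) coming from \eqref{cp1}, and your integral-form Gronwall step is exactly the paper's contradiction at the first time $t_0$ after which the inequality fails. The only cosmetic difference is that you detail $T_{t+t_c}[c]\le\bar T_t\circ T_{t_c}[c]$ using a minimizer of $\bar T$ while the paper details the other inequality using a minimizer of $T$; these are mirror images, and your appeal to Lemma \ref{com-c} for the $\bar T$-minimizer involves the same elision the paper makes when it declares the proof of the second inequality to be the same.
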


\Proof We shall only prove the first inequality, the proof of second one is the same. We shall argue by contradiction, let $\psi=T_{t_{c}}[-c]$ and suppose that there is $(x_{1},t_{1})\in M\times(0,+\infty)$ such that
\begin{equation}\label{contra}
T_{t_{1}}\psi(x_{1})<\bar{T}_{t_{1}}\psi(x_{1}).
\end{equation}

Let $\gm_{x_{1},t_{1}}:[0,t_{1}]\rightarrow M$ be the $C^{r}$ curve that achieves the infimum for $T_{t_{1}}\psi(x_{1})$. Note that $T_0\psi(x)=\bar{T}_0\psi(x)=\psi(x)$ for each $x\in M$, there is $t_{0}\in[0,t_{1})$ such that for $\tau\in[t_{0},t_{1}]$,
\begin{equation}\label{4}
\begin{split}
&T_{t_{0}}\psi(\gm_{x_{1},t_{1}}(t_{0}))=\bar{T}_{t_{0}}\psi(\gm_{x_{1},t_{1}}(t_{0})),\\
&T_{\tau}\psi(\gm_{x_{1},t_{1}}(\tau))<\bar{T}_{\tau}\psi(\gm_{x_{1},t_{1}}(\tau)).
\end{split}
\end{equation}
Since $\gm_{x_1,t_1}:[0,t_{1}]\rightarrow M$ is part of the minimizer of $T_{t_1+t_{c}}[-c](x_1)$, according to Lemma \ref{com-c}, there exists a compact subset $\mathcal{L}K_{0}$ of $TM\times\R$ such that for $\tau\in[t_{0},t_{1}]$,
\begin{equation}\label{cp4}
\begin{split}
(\gm_{x_1,t_1}(\tau),T_{\tau}\psi(\gm_{x_{1},t_{1}}(\tau)),\dot{\gm}_{x_1,t_1}(\tau))\subset\mathcal{L}K_{0},\\
(\gm_{x_1,t_1}(\tau),u_{-}(\gm_{x_{1},t_{1}}(\tau)),\dot{\gm}_{x_1,t_1}(\tau))\subset\mathcal{L}K_{0}
\end{split}
\end{equation}
By non-expansiveness of the operator $\bar{T}_{t}$, one can find that for $\tau\in[t_{0},t_{1}]$,
\begin{equation*}
\|\bar{T}_{\tau}\psi-u_{-}\|_{C^{0}}\leq\|\psi-u_{-}\|_{C^{0}}\leq1,
\end{equation*}
thus we have
\begin{equation}\label{cp5}
(\gm_{x_1,t_1}(\tau),\bar{T}_{\tau}\psi(\gm_{x_{1},t_{1}}(\tau)),\dot{\gm}_{x_1,t_1}(\tau))\subset\mathcal{L}K_{0}.
\end{equation}
By the definition of solution semigroup, we have
\begin{align*}
\begin{split}
T_{t_{1}}\psi(x_{1})=T_{t_{0}}\psi(\gm_{x_{1},t_{1}}(t_{0}))+\int_{t_{0}}^{t_{1}}L(\gm_{x_{1},t_{1}}(\tau),T_{\tau}\psi(\gm_{x_{1},t_{1}}(\tau)),\dot{\gm}_{x_{1},t_{1}}(\tau))d\tau,\\
\bar{T}_{t_{1}}\psi(x_{1})\leq\bar{T}_{t_{0}}\psi(\gm_{x_{1},t_{1}}(t_{0}))+\int_{t_{0}}^{t_{1}}\bar{L}(\gm_{x_{1},t_{1}}(\tau),\bar{T}_{\tau}\psi(\gm_{x_{1},t_{1}}(\tau)),\dot{\gm}_{x_{1},t_{1}}(\tau))d\tau.
\end{split}
\end{align*}

From the definition of $\bar{L}$, it follows that $\frac{\partial \bar{L}}{\partial u}\equiv-\lambda$ and $\bar{L}(x,u_-(x),\dot{x})=L(x,u_{-}(x),\dot{x})$. So from the above two equations, we could calculate as
\begin{align*}
&\,\,\,\,T_{t_{1}}\psi(x_{1})-\bar{T}_{t_{1}}\psi(x_{1})\\
\geq&\int_{t_{0}}^{t_{1}}L(\gm_{x_{1},t_{1}}(\tau),T_{\tau}\psi(\gm_{x_{1},t_{1}}(\tau)),\dot{\gm}_{x_{1},t_{1}}(\tau))-\bar{L}(\gm_{x_{1},t_{1}}(\tau),\bar{T}_{\tau}\psi(\gm_{x_{1},t_{1}}(\tau)),\dot{\gm}_{x_{1},t_{1}}(\tau))d\tau\\
\geq&\int_{t_{0}}^{t_{1}}\lambda\bigg[\bar{T}_{\tau}\psi(\gm_{x_{1},t_{1}}(\tau))-u_{-}(\gm_{x_{1},t_{1}}(\tau))\bigg]-\lambda\bigg[T_{\tau}\psi(\gm_{x_{1},t_{1}}(\tau))-u_{-}(\gm_{x_{1},t_{1}}(\tau))\bigg]d\tau\\
=&\int_{t_{0}}^{t_{1}}\lambda\bigg[\bar{T}_{\tau}\psi(\gm_{x_{1},t_{1}}(\tau))-T_{\tau}\psi(\gm_{x_{1},t_{1}}(\tau))\bigg]d\tau>0,\\
\end{align*}
where the second inequality uses the equations \eqref{cp4}, \eqref{cp5} and the last one uses by \eqref{4}. This contradicts to our assumption \eqref{contra}.
\end{proof}

Combining \eqref{cp1}-\eqref{cp2}, we get for $t\geq0$
\begin{equation}
\begin{split}
&\bar{T}_{t}\circ T_{t_{c}}[-c]\leq u_{-}\leq\bar{T}_{t}\circ T_{t_{c}}[c],\\
&\bar{T}_{t}\circ T_{t_{c}}[-c]\leq T_{t+t_{c}}\varphi\leq\bar{T}_{t}\circ T_{t_{c}}[c],
\end{split}
\end{equation}
which implies that
\begin{equation}\label{cp3}
\|T_{t+t_{c}}\varphi-u_{-}\|_{C^{0}}\leq\max\{\|\bar{T}_{t}\circ T_{t_{c}}[c]-u_{-}\|_{C^{0}},\|\bar{T}_{t}\circ T_{t_{c}}[-c]-u_{-}\|_{C^{0}}\}.
\end{equation}
From Proposition \ref{prre-2}, $\bar{T}_{t}\circ T_{t_{c}}[\pm c](x)$ are solutions of the pair of Cauchy problems
\begin{equation}\label{uld}
\begin{cases}
\partial_tu+\bar{H}(x,u,\partial_{x}u)=0,\hspace{0.3cm}(x,t)\in M\times[0,+\infty)\\
u(0,x)=T_{t_{c}}[\pm c].
\end{cases}
\end{equation}
respectively. Then we apply Lemma \ref{dsep} to conclude that for $t\geq0$,
\begin{equation*}
\|\bar{T}_{t}\circ T_{t_{c}}[\pm c]-u_{-}\|_{C^{0}}\leq O(e^{-\lambda t}),
\end{equation*}
Combining this and \eqref{cp3}, we have proved the inequality \eqref{ecs}.

\subsection{Step III}
By applying Lemma \ref{key}, we show the exponential convergence of the pseudo graph of $T_{t}\varphi$. Without loss of generality, let us assume $t>t_{c}$, by Proposition \ref{prre-2} and Lemma \ref{cps},
\[\mathcal{L}^{-1}(\gamma_{x,t},T_{t}\varphi(\gamma_{x,t}),\dot{\gamma}_{x,t}) \quad {\text{and}} \quad \mathcal{L}^{-1}(\gamma_{x,-},u_{-}(\gamma_{x,-}),\dot{\gamma}_{x,-})\]
are $C^{r-1}$ orbits of contact Hamilton flow $\Phi_{H}^{t}$ which are contained in a compact set $\mathcal{L}K_{0}$ only depending on $H$. By the contact Hamilton equations \eqref{hjech} and the boundedness of the norm of $d\mathcal{L}$ on $K_{0}$, there exists $A:=A(H)>\Delta$ such that
\begin{equation}\label{vb}
\begin{split}
&|(\dot{\gamma}_{x,t}(\tau),\ddot{\gamma}_{x,t}(\tau))|_{(\gamma_{x,t}(\tau),\dot{\gamma}_{x,t}(\tau))}\leq A,\hspace{0.6cm}\tau\in[1,t]\\
&|(\dot{\gamma}_{x,-}(\tau),\ddot{\gamma}_{x,-}(\tau))|_{(\gamma_{x,-}(\tau),\dot{\gamma}_{x,-}(\tau))}\leq A,\,\,\,\tau\in(-\infty,0],
\end{split}
\end{equation}
where $\Delta$ is given by Lemma \ref{key}.

From Section 2 or \cite{CS}, we know that $u_{-}$ is locally semiconcave on $M$. Since $u_{-}$ is a solution to equation \eqref{sta}, for any $x\in M$ and $p\in D^{\ast}u_{-}(x)$
\begin{equation}\label{hje-0}
H(x, u_{-}(x),p)=0.
\end{equation}
An estimate of contact Hamiltonian $H$ on the 1-graph of $T_{t}\varphi$ could also be given by the following
\begin{lemma}\label{eed}
There exists $\tilde{B}:=\tilde{B}(\varphi,H)>0$ such that for $t>t_{c}$,
\begin{equation}
\max_{p\in D^{\ast}T_{t}\varphi(x)}|H(x,T_{t}\varphi(x),p)|\leq\tilde{B}e^{-\lambda t}.
\end{equation}
\end{lemma}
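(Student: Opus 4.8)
The plan is to track how the energy $H$ evolves along the contact Hamilton flow restricted to the orbits that generate $\bar{\mathcal{J}}^1_{T_t\varphi}$, and to feed into this the $C^0$-convergence rate \eqref{ecs} already proved in Step II. Fix $t>t_c$ and a point $x\in M$. By Proposition \ref{prre-2}(i), for any reachable differential $p\in D^\ast T_t\varphi(x)$ there is a minimizer (approximated by the $\gm_{x_k,t}$'s) whose lift $z(\tau)=\mathcal{L}^{-1}(\gm_{x,t}(\tau),T_\tau\varphi(\gm_{x,t}(\tau)),\dot\gm_{x,t}(\tau))$ is a $C^{r-1}$ orbit of $\Phi^\tau_H$ on $[0,t]$ with $\pi z(t)=(x,p)$. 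From Lemma \ref{com-c}, for $\tau$ in a suitable subinterval this orbit stays in the compact set $\mathcal{L}K_0$, on which $\partial H/\partial u\ge\lambda(H)$ by the definition \eqref{hbaa}.

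First I would locate a ``good'' time $t_0\in[t-1,t]$: exactly as in the proof of Lemma \ref{com-c}, comparing $T_t\varphi(\gm(t))-T_{t-1}\varphi(\gm(t-1))$ with the superlinear lower bound on $L$ and with the a priori $C^0$ bound, one finds $t_0$ with $|\dot\gm_{x,t}(t_0)|$ bounded by a constant depending only on $H$; hence the lift $z(t_0)$ lies in a fixed compact set, and $|H(z(t_0))|\le B(H)$. Second, and this is the step that produces the exponential factor, I would apply the energy-evolution identity \eqref{ef}, i.e. $\frac{d}{d\tau}H(\Phi^\tau_H z)=-\frac{\partial H}{\partial u}\cdot H$, integrated from $t_0$ down to the initial time: running the flow \emph{backward} from $z(t_0)$ to $z(0)$ and using $\partial H/\partial u\ge\lambda$ along the orbit (which stays in $\mathcal{L}K_0$), we get $|H(z(0))|\le e^{-\lambda t_0}\,|H(z(0))|\cdot$\dots — more carefully, \eqref{eb} gives $|H(z(t_0))|\ge e^{-\Lambda(t_0-0)}|H(z(0))|$ in one direction and, crucially, $|H(\Phi^{t-t_0}_H z(t_0))|\le e^{-\lambda(t-t_0)}|H(z(t_0))|$ forward; but this only gives decay on $[t_0,t]$, a bounded interval. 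The honest route is: since $T_{t_c}[\pm c]$ bound $\varphi$ from Step II and $z(0)$ corresponds to the fixed initial data, the energy $|H(z(0))|$ is controlled by $\|\varphi\|_{C^0}$ and the Lipschitz bound on $T_{t_c}[\pm c]$; then forward propagation by \eqref{eb} over time $t$ yields $|H(x,T_t\varphi(x),p)|=|H(\Phi^t_H z(0))|\le$ \dots which only gives $e^{-\lambda' t}$ if $\partial H/\partial u\ge\lambda'$ along the \emph{whole} orbit from $0$ to $t$ — and that orbit does stay in $\mathcal{L}K_0$ for $\tau\ge$ some fixed time by Lemma \ref{com-c}, so the estimate \eqref{ef} with lower bound $\lambda$ gives $|H(z(t))|\le e^{-\lambda(t-1)}|H(z(1))|$, and $|H(z(1))|$ is bounded by a constant $\tilde B_0$ depending only on $H$ (by Lemma \ref{com-c}, since $z(1)\in\mathcal{L}K_0$ whenever $t\ge t(\varphi)+1$). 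Setting $\tilde B:=e^{\lambda}\tilde B_0$ depending on $\varphi$ only through $t_c=t_c(\varphi)$ (needed so that the orbit is already in $\mathcal{L}K_0$ at time $1$), one obtains $\max_{p\in D^\ast T_t\varphi(x)}|H(x,T_t\varphi(x),p)|\le\tilde B e^{-\lambda t}$.

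Finally, the passage from the smooth minimizer estimate to the reachable-differential estimate is routine: given $p\in D^\ast T_t\varphi(x)$, pick $x_k\to x$ with $x_k\in\mathcal D_{T_t\varphi}$ and $d_{x}T_t\varphi(x_k)\to p$; the bound above applies to each $(x_k,T_t\varphi(x_k),d_xT_t\varphi(x_k))$ with the same constant, and continuity of $H$ together with continuity of $T_t\varphi$ passes the inequality to the limit. The main obstacle is the bookkeeping in the middle step: ensuring that the constant $\lambda$ in the exponent is exactly $\lambda(H)=\inf_{K_0}\partial H/\partial u$ and that the prefactor depends on $\varphi$ only through the entry time $t_c(\varphi)$, which requires invoking Lemma \ref{com-c} to guarantee the generating orbit has entered the compact set $\mathcal{L}K_0$ by a time independent of $\varphi$ once $t\ge t(\varphi)$, and then applying the energy dissipation identity \eqref{ef}/\eqref{eb} only on the portion of the orbit lying in that set.
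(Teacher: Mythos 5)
Your argument is essentially the paper's proof: realize each reachable differential via a minimizer whose lift $z_{x,t}(\tau)$ is a contact Hamilton orbit, invoke Lemma \ref{com-c} to confine it in $K_{0}$ on $[t_{c},t]$ where $\frac{\partial H}{\partial u}\geq\lambda$, apply \eqref{ef}--\eqref{eb} to get $|H(z_{x,t}(t))|\leq B\,e^{\lambda(t_{c}-t)}$ with $B$ from the proof of Lemma \ref{com-c}, and set $\tilde{B}=e^{\lambda t_{c}}B$. The only bookkeeping slip in your write-up, measuring the decay from time $1$ rather than from the entry time $t_{c}$ (the orbit is only guaranteed to lie in $\mathcal{L}K_{0}$ for $\tau\geq t_{c}$), is harmless since, as you yourself note, the prefactor may depend on $\varphi$ through $t_{c}$.
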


\begin{proof}
For any $x\in M$ and $p\in D^{\ast}T_{t}\varphi(x)$, there exists a minimizer $\gamma_{x,t}$ of $T_{t}\varphi(x)$ such that $(x,T_{t}\varphi(x),p)=
\mathcal{L}^{-1}(x,T_{t}\varphi(x),\dot{\gamma}_{x,t}(t))$. By Proposition \ref{prre-2}, for $\tau\in[0,t]$, $z_{x,t}(\tau):=\mathcal{L}^{-1}(\gamma_{x,t}(\tau),T_{t}\varphi(\gamma_{x,t}(\tau)),\dot{\gamma}_{x,t}(\tau))$ lies on an orbit of the contact Hamiltonian flow $\Phi_{H}^{t}$. By Lemma \ref{cps}, we have that $z_{x,t}|_{[t_{c},t]}\in K_{0}$ and for $\tau\geq t_{c}$, $\frac{\partial H}{\partial u}(z_{x,t}(\tau))\geq\lambda>0$. Then equations \eqref{ef} and \eqref{eb} imply that
\begin{equation}
|H(z_{x,t}(t))|\leq|H(z_{x,t}(t_{c}))|e^{\lambda(t_{c}-t)}\leq Be^{\lambda(t_{c}-t)},
\end{equation}
where $B$ is defined in the proof of Lemma \ref{cps}. Now we choose $\tilde{B}(\varphi,H)=e^{\lambda t_{c}}B$ to complete the proof.
\end{proof}

We are ready for applying Lemma \ref{key} to prove \eqref{ecj}. Let us first take $u=u_{-}$ and there are two cases to dealt with, namely,
\begin{itemize}
  \item $d_{TM}((x,\dot{\gamma}_{x,t}(t)),\mathcal{L}_{u_{-}}\bar{\mathcal{G}}_{u_{-}})\geq 2A$,
  \item $d_{TM}((x,\dot{\gamma}_{x,t}(t)),\mathcal{L}_{u_{-}}\bar{\mathcal{G}}_{u_{-}})<2A$.
\end{itemize}

If $d_{TM}((x,\dot{\gamma}_{x,t}(t)),\mathcal{L}_{u_{-}}\bar{\mathcal{G}}_{u_{-}})\geq2A$, note that $A>\Delta$, we integral $l_{u_{-}}$ along $\gamma_{x,t}$ over the interval $[t-1,t]$ and obtain
\begin{align*}
&\int^{t}_{t-1}l_{u_{-}}(\gamma_{x,t}(\tau),\dot{\gamma}_{x,t}(\tau))d\tau+[u_{-}(\gamma_{x,t}(t))-u_{-}(\gamma_{x,t}(t-1))]\\
=&\int^{t}_{t-1}L(\gamma_{x,t}(\tau),u_{-}(\gamma_{x,t}(\tau)),\dot{\gamma}_{x,t}(\tau))d\tau\\
\leq&\int^{t}_{t-1}L(\gamma_{x,t}(\tau),T_{\tau}\varphi(\gamma_{x,t}(\tau)),\dot{\gamma}_{x,t}(\tau))d\tau+\Lambda\int^{t}_{t-1}|T_{\tau}\varphi(\gamma_{x,t}(\tau))-u_{-}(\gamma_{x,t}(\tau))|d\tau\\
=&[T_{t}\varphi(x)-T_{t-1}\varphi(\gamma_{x,t}(t-1))]+\Lambda\int^{t}_{t-1}|T_{\tau}\varphi(\gamma_{x,t}(\tau))-u_{-}(\gamma_{x,t}(\tau))|d\tau\\
\leq&[T_{t}\varphi(x)-T_{t-1}\varphi(\gamma_{x,t}(t-1))]+O(e^{-\lambda t}),\\
\end{align*}
where for the first equality we use the definition of $l_{u}$ and Lemma \ref{dd-r}, for the last inequality we use Lemma \ref{dsep}. Again by Lemma \ref{dsep}, we have
\begin{equation}\label{5}
\int^{t}_{t-1}l_{u_{-}}(\gamma_{x,t}(\tau),\dot{\gamma}_{x,t}(\tau))d\tau\leq O(e^{-\lambda t})
\end{equation}
However, by Lemma \ref{key}, $l_{u_{-}}(\gamma_{x,t}(\tau),\dot{\gamma}_{x,t}(\tau))\geq\beta$, thus
\begin{equation*}
\int^{t}_{t-1}l_{u_{-}}(\gamma_{x,t}(\tau),\dot{\gamma}_{x,t}(\tau))d\tau\geq\beta>0,
\end{equation*}
this contradicts to \eqref{5} as $t\rightarrow\infty$.

If $d_{TM}((x,\dot{\gamma}_{x,t}(t)),\mathcal{L}_{u_{-}}\bar{\mathcal{G}}_{u_{-}})\leq2A$, let $\delta_0:=d_{TM}((x,\dot{\gamma}_{x,t}(t)),\mathcal{L}_{u_{-}}\bar{\mathcal{G}}_{u_{-}})$. Then we notice that by \eqref{vb}, on the interval $[t-\frac{\delta_0}{2A},t]$,
\begin{equation}
d_{TM}((\gamma_{x,t}(\tau),\dot{\gamma}_{x,t}(\tau)),\mathcal{L}_{u_{-}}\bar{\mathcal{G}}_{u_{-}})\geq\frac{\delta_0}{2}.
\end{equation}
We integral $l_{u_{-}}$ along $\gamma_{x,t}$ over the interval $[t-\frac{\delta_0}{2A},t]$, by similar calculation as before we obtain \begin{equation}\label{6}
\int^{t}_{t-\frac{\delta_0}{2A}}l_{u_{-}}(\gamma_{x,t}(\tau),\dot{\gamma}_{x,t}(\tau))d\tau\leq O(e^{-\lambda t}).
\end{equation}
Again by Lemma \ref{key}, we could assume $\alpha\leq\frac{\beta}{\Delta^{2}}$ and obtain
\begin{equation}\label{7}
\int^{t}_{t-\frac{\delta_{0}}{2A}}l_{u_{-}}(\gamma_{x,t}(\tau),\dot{\gamma}_{x,t}(\tau))d\tau\geq\min\{\frac{\beta\delta_0}{2A},\frac{\alpha\delta_0^{3}}{8A}\}.
\end{equation}
By \eqref{6} and \eqref{7}, $d_{TM}((x,\dot{\gamma}_{x,t}(t)),\mathcal{L}_{u_{-}}\bar{\mathcal{G}}_{u_{-}})=\delta_0\leq O(e^{-\frac{\lambda}{3}t})$.

For the other side, we take $u=T_{t}\varphi$ and dealt with also two cases
\begin{itemize}
  \item $d_{TM}((x,\dot{\gamma}_{x,-}(0)),\mathcal{L}_{T_{t}\varphi}\bar{\mathcal{G}}_{T_{t}\varphi})\geq2A$,
  \item $d_{TM}((x,\dot{\gamma}_{x,-}(0)),\mathcal{L}_{T_{t}\varphi}\bar{\mathcal{G}}_{T_{t}\varphi})<2A$.
\end{itemize}

If $d_{TM}((x,\dot{\gamma}_{x,-}(0)),\mathcal{L}_{T_{t}\varphi}\bar{\mathcal{G}}_{T_{t}\varphi})\geq2A$, we integral $l_{T_{t}\varphi}$ along $\gamma_{x,-}$ over the time interval $[-1,0]$ and by similar calculation, we obtain
\begin{equation}\label{8}
\int^{0}_{-1}l_{T_{t}\varphi}(\gamma_{x,-}(\tau),\dot{\gamma}_{x,-}(\tau))d\tau\leq O(e^{-\lambda t})
\end{equation}
On the other hand, combining Lemma \ref{key} and Lemma \ref{eed}, we have for $\tau\in[-1,0]$
\begin{equation}
l_{T_{t}\varphi}(\gamma_{x,-}(\tau)),\dot{\gamma}_{x,-}(\tau))\geq\beta-O(e^{-\lambda t}),
\end{equation}
so we take integration and obtain
\begin{equation}
\int^{0}_{-1}l_{T_{t}\varphi}(\gamma_{x,-}(\tau)),\dot{\gamma}_{x,-}(\tau))d\tau\geq\beta-O(e^{-\lambda t}),
\end{equation}
this contradicts to \eqref{8} as $t\rightarrow\infty$.

If $d_{TM}((x,\dot{\gamma}_{x,-}(0)),\mathcal{L}_{T_{t}\varphi}\bar{\mathcal{G}}_{T_{t}\varphi})\leq2A$, let $\delta_1:=d_{TM}((x,\dot{\gamma}_{x,-}(0)),\mathcal{L}_{T_{t}\varphi}\bar{\mathcal{G}}_{T_{t}\varphi})$. Then we notice that by \eqref{vb}, on the interval $[-\frac{\delta_1}{2A},0]$,
\begin{equation}
d_{TM}((\gamma_{x,-}(\tau),\dot{\gamma}_{x,-}(\tau)),\mathcal{L}_{T_{t}\varphi}\bar{\mathcal{G}}_{T_{t}\varphi})\geq\frac{\delta_1}{2}.
\end{equation}
We integral $l_{T_{t}\varphi}$ along $\gamma_{x,-}$ over the interval $[-\frac{\delta_1}{2A},0]$, by similar calculation as before we obtain
\begin{equation}\label{9}
\int^{0}_{-\frac{\delta_1}{2A}}l_{T_{t}\varphi}(\gamma_{x,-}(\tau),\dot{\gamma}_{x,-}(\tau))d\tau\leq O(e^{-\lambda t}).
\end{equation}
As before, we could assume $\alpha\leq\frac{\beta}{\Delta^{2}}$ and obtain
\begin{equation}\label{10}
\int^{0}_{-\frac{\delta_{1}}{2A}}l_{T_{t}\varphi}(\gamma_{x,t}(\tau),\dot{\gamma}_{x,t}(\tau))d\tau\geq\min\{\frac{\beta\delta_1}{2A},\frac{\alpha\delta_1^{3}}{8A}\}-\delta_{1}\cdot O(e^{-\lambda t}).
\end{equation}
By \eqref{9},\eqref{10} and the fact that $\delta_{1}\leq2A$, we obtain $d_{TM}((x,\dot{\gamma}_{x,-}(0)),\mathcal{L}_{T_{t}\varphi}\bar{\mathcal{G}}_{T_{t}\varphi})=\delta_1\leq O(e^{-\frac{\lambda}{3}t})$.

\subsection{Step IV}
We complete the proof of \eqref{ecj} in this last step. It is clear that
\begin{equation}\label{3-1}
\begin{split}
&\{(x,u_{-}(x),\dot{\gamma}_{x,-}(0)):x\in M,\gamma_{x,-} \text{ is a minimizer of }u_{-}(x)\}=\mathcal{L}\bar{\mathcal{J}}^{1}_{u_{-}},\\
&\{(x,T_{t}\varphi(x),\dot{\gamma}_{x,t}(t)):x\in M,\gamma_{x,t} \text{ is a minimizer of }T_{t}\varphi(x)\}=\mathcal{L}\bar{\mathcal{J}}^{1}_{T_{t}\varphi},
\end{split}
\end{equation}
We act $\pi$ on two sides of \eqref{3-1} to obtain
\begin{equation}\label{3-2}
\begin{split}
&\{(x,\dot{\gamma}_{x,-}(0)):x\in M,\gamma_{x,-} \text{ is a minimizer of }u_{-}(x)\}=\pi\mathcal{L}\bar{\mathcal{J}}^{1}_{u_{-}}=\mathcal{L}_{u_{-}}\bar{\mathcal{G}}_{u_{-}},\\
&\{(x,\dot{\gamma}_{x,t}(t)):x\in M,\gamma_{x,t} \text{ is a minimizer of }T_{t}\varphi(x)\}=\pi\mathcal{L}\bar{\mathcal{J}}^{1}_{T_{t}\varphi}=\mathcal{L}_{T_{t}\varphi}\bar{\mathcal{G}}_{T_{t}\varphi},
\end{split}
\end{equation}

By the compactness of $\mathcal{L}_{u_{-}}\bar{\mathcal{G}}_{u_{-}}$ and the second step, we have for any $x\in M$, any minimizer $\gamma_{x,-}$ of $u_{-}(x)$, there exists $x^{\prime}\in M$ and a minimizer $\gm_{x^{\prime},t}$ of $T_{t}\varphi(x^{\prime})$ such that
\begin{equation}\label{3-3}
d_{TM}((x,\dot{\gamma}_{x,-}(0)),(x^{\prime},\dot{\gm}_{x^{\prime},t}(t)))=d_{TM}((x,\dot{\gamma}_{x,-}(0)),\mathcal{L}_{T_{t}\varphi}\bar{\mathcal{G}}_{T_{t}\varphi})\leq O(e^{-\frac{\lambda t}{3}}),
\end{equation}
thus $d_{M}(x,x^{\prime})\leq d_{TM}((x,\dot{\gamma}_{x,-}(0)),(x^{\prime},\dot{\gm}_{x^{\prime},t}(t)))\leq O(e^{-\frac{\lambda t}{3}})$. We estimate as
\begin{align*}
&d_{TM\times\R}((x,u_{-}(x),\dot{\gamma}_{x,-}(0)),\mathcal{L}\bar{\mathcal{J}}^{1}_{T_{t}\varphi})\\
\leq&d_{TM\times\R}((x,u_{-}(x),\dot{\gamma}_{x,-}(0)),(x^{\prime},T_{t}\varphi(x^{\prime}),\dot{\gm}_{x^{\prime},t}(t)))\\
\leq&d_{TM}((x,\dot{\gamma}_{x,-}(0)),(x^{\prime},\dot{\gamma}_{x^{\prime},t}(t)))+|u_{-}(x)-T_{t}\varphi(x)|+|T_{t}\varphi(x)-T_{t}\varphi(x^{\prime})|\\
\leq&d_{TM}((x,\dot{\gamma}_{x,-}(0)),\mathcal{L}_{T_{t}\varphi}\bar{\mathcal{G}}_{T_{t}\varphi})+O(e^{-\lambda t})+O(e^{-\frac{\lambda t}{3}})\\
\leq&O(e^{-\frac{\lambda}{3}t}),
\end{align*}
where the second inequality uses the definition of $d_{TM\times\R}$ and the third one uses the facts that $\{T_{t}\varphi\}_{t\geq t_{c}}$ is equi-Lipschitzian and $d_{M}(x,x^{\prime})\leq O(e^{-\frac{\lambda t}{3}})$. In the same way, we have
$$
d_{TM\times\R}((x,T_{t}\varphi(x),\dot{\gamma}_{x,t}(t)),\mathcal{L}\bar{\mathcal{J}}^{1}_{u_{-}})\leq O(e^{-\frac{\lambda}{3}t}).
$$

Since $\cup_{t\geq t_{c}}\bar{\mathcal{J}}^{1}_{T_{t}\varphi}\cup\bar{\mathcal{J}}^{1}_{u_{-}}$ is contained in a compact subset of $T^{\ast}M$, we could translate the above two inequalities into $T^{\ast}M\times\R$ by $\mathcal{L}$, which combining with \eqref{3-1} implies
\begin{equation}
\begin{split}
\max_{z\in\bar{\mathcal{J}}^{1}_{u_{-}}}d_{T^{\ast}M\times\R}(z,\bar{\mathcal{J}}^{1}_{T_{t}\varphi})\leq O(e^{-\frac{\lambda}{3}t}),\\
\max_{z\in\bar{\mathcal{J}}^{1}_{T_{t}\varphi}}d_{T^{\ast}M\times\R}(z,\bar{\mathcal{J}}^{1}_{u_{-}})\leq O(e^{-\frac{\lambda}{3}t}).
\end{split}
\end{equation}
Thus by the definition \ref{hd}, we have for the Hausdorff metric $d_{H}$ defined by $d_{T^{\ast}M\times\R}$,
\begin{equation}
d_{H}(\bar{\mathcal{J}}^{1}_{T_{t}\varphi},\bar{\mathcal{J}}^{1}_{u_{-}})\leq O(e^{-\frac{\lambda}{3}t}).
\end{equation}
Note that $d_{T^{\ast}M\times\R}$ is induced by a Riemannian metric and any two Riemannian metric defined on a common compact manifold are equivalent, we are done.

\appendix
\section{On the counterexample}
Example (\ref{ex}) shows that the milder assumptions (H1)-(H2) and $\frac{\partial H}{\partial u}\geq 0$ is not sufficient to guarantee the exponential convergence of the solution semigroup $\{T_t\}_{t\geq0}$ as in Theorem \ref{main}. We shall give a detail deduction here and first we need the following
\begin{lemma}\label{diffe}
Let $u$ be a Lipschitz function on $M$ and $\gm:[0,t]\rightarrow M$ a minimizer of $T_{t}u(x)$, there holds
\[\frac{\partial L}{\partial v}(\gm(0),u(\gm(0)),\dot{\gm}(0))\in D^-u(\gm(0)),\]
where $D^-u(x)$ denotes the lower differential of $u$ at $x$.
\end{lemma}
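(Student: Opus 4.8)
Fix a coordinate chart around $x_{0}:=\gamma(0)$, identifying a neighbourhood of $x_{0}$ with an open set in $\R^{n}$; since $D^{-}u(x_{0})$ is invariant under changes of chart it suffices to work in these coordinates. Set $v_{0}:=\dot{\gamma}(0)$ and $p_{0}:=\frac{\partial L}{\partial v}(x_{0},u(x_{0}),v_{0})$. Recall that $p_{0}\in D^{-}u(x_{0})$ is exactly the condition
\[
\liminf_{y\to x_{0}}\frac{u(y)-u(x_{0})-\langle p_{0},\,y-x_{0}\rangle}{|y-x_{0}|}\ \ge\ 0 .
\]
The plan is: for each small $\sigma>0$ produce a covector $\bar p_{\sigma}$ satisfying the same inequality with $0$ replaced by $-C\sigma$ (with $C$ independent of $\sigma$), and such that $\bar p_{\sigma}\to p_{0}$ as $\sigma\to0^{+}$; then a two-step passage to the limit gives the claim.

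\textbf{Bellman optimality and the competitor curve.} First I would record that, since $\gamma$ realizes $T_{t}u(x)$, for every $0<\sigma<t$ the restriction $\gamma|_{[0,\sigma]}$ realizes $T_{\sigma}u(\gamma(\sigma))$; this is a standard cut-and-paste consequence of the variational principle and the semigroup property (Proposition~\ref{prre-1}), using that $T_{\tau}u$ is a fixed function so the action splits additively along a concatenation. Hence
\[
T_{\sigma}u(\gamma(\sigma))=u(x_{0})+\int_{0}^{\sigma}L\big(\gamma(\tau),T_{\tau}u(\gamma(\tau)),\dot{\gamma}(\tau)\big)\,d\tau .
\]
For $y$ near $x_{0}$ put $\xi_{y}(\tau):=\gamma(\tau)+(1-\tau/\sigma)(y-x_{0})$ on $[0,\sigma]$, which stays in the chart once $|y-x_{0}|$ is small, satisfies $\xi_{y}(0)=y$, $\xi_{y}(\sigma)=\gamma(\sigma)$, $\dot{\xi}_{y}(\tau)=\dot{\gamma}(\tau)-(y-x_{0})/\sigma$, and is admissible (it is $C^{r}$ since $\gamma$ is, by Proposition~\ref{prre-2}). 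Using $\xi_{y}$ as a competitor in the defining infimum for $T_{\sigma}u(\gamma(\sigma))$ and subtracting the displayed identity yields
\[
u(y)-u(x_{0})\ \ge\ \int_{0}^{\sigma}\Big[L\big(\gamma(\tau),T_{\tau}u(\gamma(\tau)),\dot{\gamma}(\tau)\big)-L\big(\xi_{y}(\tau),T_{\tau}u(\xi_{y}(\tau)),\dot{\xi}_{y}(\tau)\big)\Big]\,d\tau .
\]

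\textbf{Expansion and limit.} I would Taylor-expand the integrand in $(y-x_{0})$ on the compact set where the curves live. The velocity slot contributes $\frac{1}{\sigma}\int_{0}^{\sigma}\frac{\partial L}{\partial v}(\gamma(\tau),T_{\tau}u(\gamma(\tau)),\dot{\gamma}(\tau))\,d\tau\cdot(y-x_{0})=:\langle\bar p_{\sigma},y-x_{0}\rangle$; the position slot is bounded in absolute value by $C_{1}\sigma|y-x_{0}|$, with $C_{1}$ controlling $\partial L/\partial x$ on that compact set; and the $u$-slot is bounded by $\Lambda\kappa\sigma|y-x_{0}|$, using (L3) together with the fact that $\{T_{\tau}u\}_{\tau\in[0,1]}$ is equi-$\kappa$-Lipschitz because $u$ is Lipschitz. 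The leftover error is $o(|y-x_{0}|)$ for each fixed $\sigma$. Therefore $\liminf_{y\to x_{0}}\big(u(y)-u(x_{0})-\langle\bar p_{\sigma},y-x_{0}\rangle\big)/|y-x_{0}|\ge -C\sigma$ with $C$ independent of $\sigma$. Since $\tau\mapsto(\gamma(\tau),T_{\tau}u(\gamma(\tau)),\dot{\gamma}(\tau))$ is continuous, $\bar p_{\sigma}\to p_{0}$ as $\sigma\to0^{+}$; writing the difference quotient for $p_{0}$ as the one for $\bar p_{\sigma}$ plus $\langle\bar p_{\sigma}-p_{0},y-x_{0}\rangle/|y-x_{0}|$, whose absolute value is $\le|\bar p_{\sigma}-p_{0}|$, gives $\liminf_{y\to x_{0}}\big(u(y)-u(x_{0})-\langle p_{0},y-x_{0}\rangle\big)/|y-x_{0}|\ge -C\sigma-|\bar p_{\sigma}-p_{0}|$, and letting $\sigma\to0^{+}$ forces this $\liminf$ to be $\ge0$, i.e. $p_{0}\in D^{-}u(x_{0})$.

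\textbf{Main obstacle.} The difficulty is bookkeeping rather than conceptual. The key point is that the $o(|y-x_{0}|)$ remainder depends on $\sigma$, so one cannot simply send $\sigma\to0$ inside the quotient; this is precisely why $\bar p_{\sigma}$ is introduced and why the limit is taken in the two stages above. The other point to state carefully is the uniform-in-$\tau$ Lipschitz bound for $T_{\tau}u$ on $[0,1]$ for Lipschitz initial data, which makes the $u$-slot contribution genuinely $O(\sigma|y-x_{0}|)$; this is standard for the solution semigroup but deserves to be made explicit. The chart dependence and the smoothness of $\gamma$ are routine.
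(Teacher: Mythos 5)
Your proposal is correct and follows essentially the same route as the paper: the same linear-interpolation competitor $\gamma(\tau)+(1-\tau/\sigma)(y-x_0)$ on a short interval, the equality of the implicit action along the restricted minimizer, the bound on the position and $u$ slots via bounded derivatives of $L$ and the equi-Lipschitz property of $T_\tau u$, and a two-stage limit (first the spatial increment, then $\sigma\to 0^+$). The only cosmetic difference is that the paper first reduces, using the Lipschitz continuity of $u$, to directional lower Dini derivatives along rays $\gamma(0)+hy$, whereas you work with the full subdifferential liminf and the averaged momentum $\bar p_\sigma$; both are fine.
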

\Proof
It suffices to prove the lemma for the case when $M$ is an open subset of $\mathbb{R}^n$. Since $u$ is Lipschitzian, we only need to show for $y\in \R^n$
\begin{equation}\label{inff}
\begin{split}
\frac{\partial L}{\partial v}(\gm(0),u(\gm(0)),\dot{\gm}(0))\cdot y
\leq \liminf_{h\rightarrow 0^+}\frac{u(\gm(0)+h y)-u(\gm(0))}{ h}.
\end{split}
\end{equation}

Define $\gm_h:[0,\eps]\rightarrow M$ by $\gm_h(\tau)=\gm(\tau)+\frac{\eps-\tau}{\eps}h y$. We have
\begin{equation}\label{2-1}
\begin{split}
&T_\eps u(\gm(\eps))-u(\gm(0)+h y)\leq \int^{\eps}_{0}L(\gm_h(\tau),T_\tau u(\gm_h(\tau)),\dot{\gm}_h(\tau))d\tau,\\
&T_\eps u(\gm(\eps))-u(\gm(0))= \int^{\eps}_{0}L(\gm(\tau),T_\tau u(\gm(\tau)),\dot{\gm}(\tau))d\tau.
\end{split}
\end{equation}
Since $T_tu(x)$ is Lipschitz continuous, there exists $\kappa>0$ such that for $\tau\in [0,\eps]$,
\begin{equation}\label{2-2}
|T_su(\gm_h(\tau))-T_su(\gm(\tau))|\leq\kappa|\gm_h(\tau)-\gm(\tau)|=\kappa\frac{\eps-\tau}{\eps}h\|y\|.
\end{equation}

Combining \eqref{2-1},\eqref{2-2}, we have
\begin{align*}
&\liminf_{h\rightarrow 0^+}\frac{u(\gm(0)+h y)-u(\gm(0))}{h}\\
\geq&\lim_{\eps\rightarrow0^+}\frac{1}{\eps}\int^{\eps}_{0}\bigg[\frac{\partial L}{\partial v}(\gm(\tau),T_\tau u(\gm(\tau)),\dot{\gm}(\tau))\cdot y\bigg]d\tau\\
-&|y|\lim_{\eps\rightarrow0^+}\int^{\eps}_{0}\bigg[|\frac{\partial L}{\partial x}(\gm(\tau),T_\tau u(\gm(\tau)),\dot{\gm}(\tau))|+\kappa|\frac{\partial L}{\partial u}(\gm(\tau),T_\tau u(\gm(\tau)),\dot{\gm}(\tau))|\bigg]d\tau\\
=&\frac{\partial L}{\partial v}(\gm(0),u(\gm(0)),\dot{\gm}(0))\cdot y,
\end{align*}
where for the inequality, we use the fact that $|\frac{\eps-\tau}{\eps}|\leq1$ and for the equality, we use that $\frac{\partial L}{\partial x}, \frac{\partial L}{\partial u},\frac{\partial L}{\partial v}$ are $C^{1}$ thus locally bounded. This proves \eqref{inff}.
\end{proof}

We know that for every minimizer $\gm_{x,t}$ of $T_{t}\varphi(x)$, $(x(\tau),u(\tau),p(\tau)):=z_{x,t}(\tau)$ must satisfy the characteristic equations (\ref{hjech}). Moreover, Lemma \ref{diffe} shows that for every $C^1$ initial data $\varphi$, $p(0)=D\varphi(\gm_{x,t}(0))$. We choose the initial data $\varphi\equiv-1$, it immediately follows that for every $(x,t)\in M\times\R$ and the minimizer $\gm_{x,t}:[0,t]\rightarrow M$ attaining the infimum in the definition of $T_t\varphi(x)$, $p(0)=D\varphi(\gm_{x,t}(0))\equiv0$. From the definition of $H_{0}$ and equation (\ref{hjech}),

\begin{align}\label{B-1}
\left\{
        \begin{array}{l}
        \dot{p}=-\frac{3}{2}\rho^{\prime}(u^{3})u^{2}p,\\
        \dot{u}=\frac{1}{2}(p^2-\rho(u^{3})).
         \end{array}
                         \right.
\end{align}
By $p(0)=0$ and the first equation above, we have $p(s)\equiv0$ and the minimizer $\gm_{x,t}$ of $T_t\varphi(x)$ is the constant curve on $x$, i.e $\gm_{x,t}(s)=x$ for any $s\in[0,t]$. Combining $p(s)\equiv0$ and $u(0)=\varphi(\gm(0))=-1$, the second equation above shows that $u(s)$ is strictly increasing with respect to $s$ and forever negative, thus $u(s)\in[-1,0)$.

Now the last equation of (\ref{B-1}) reads
\begin{equation}
\dot{u}=-\frac{1}{2}u^3
\end{equation}
with the initial data $u(0)=\varphi\equiv-1$. Solving the above equation, we obtain that $u(t)=-(1+t)^{-\frac{1}{2}}\in[-1,0]$ when $t\geq0$. Moreover, we have
\begin{equation*}
u(x,t)=T_t\varphi(x)=T_t\varphi(\gm_{x,t}(t))=u(t)=-(1+t)^{-\frac{1}{2}},
\end{equation*}
which is independent of $x$.

\section{Admissible value set}
This appendix is devoted to a short introduction of the notion of admissible value set $\mathcal{C}_{H}$. We recall that a $C^{r}$ function $h:T^{\ast}M\rightarrow\R$ is called an autonomous classical Tonelli Hamiltonian if $H(x,u,p)=h(x,p)$ satisfies (H1)-(H2). As in \cite[Theorem A]{CIPP}, we can associate a real number
\begin{equation}\label{def-cv}
c(h)=\inf_{u\in C^\infty(M,\R)}\sup_{x\in M}h(x,\partial_xu)
\end{equation}
to the classical Tonelli Hamiltonian $h$ such that the equation $h(x,\partial_{x}u)=c$ has a solution if and only if $c=c(h)$. From \eqref{def-cv}, it follows that
\begin{proposition}\label{cv}
Let $h$ and $h_{i},i=1,2$ be autonomous classical Tonelli Hamiltonians, then
\begin{enumerate}
  \item  $c(h)$ is continuous with respect to the $C^{0}$ norm on the function space $C^0(T^{\ast}M,\R)$;
  \item  For any fixed real number $a\in\R$, $c(h+a)=c(h)+a$;
  \item  If $h_1\leq h_2$ on $T^{*}M$, then $c(h_1)\leq c(h_2)$.
\end{enumerate}
\end{proposition}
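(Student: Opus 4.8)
The plan is to note that parts (2) and (3) fall straight out of the defining formula \eqref{def-cv}, and that part (1) is then a formal consequence of (2) and (3); so I would prove them in the order (3), (2), (1). Nothing here reopens the variational theory behind \cite{CIPP}: the only external input is the fact, quoted there, that $c(h)$ is a genuine real number for every classical Tonelli Hamiltonian.

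For (3): assuming $h_1\le h_2$ on $T^{\ast}M$, for each $u\in C^{\infty}(M,\R)$ the smooth section $x\mapsto(x,\partial_x u)$ gives $h_1(x,\partial_x u)\le h_2(x,\partial_x u)$ for all $x\in M$; since $M$ is compact each supremum over $x$ is attained and finite, so $\sup_{x}h_1(x,\partial_x u)\le\sup_{x}h_2(x,\partial_x u)$, and taking the infimum over $u$ yields $c(h_1)\le c(h_2)$. For (2): fixing $a\in\R$, one has $\sup_{x}(h+a)(x,\partial_x u)=a+\sup_{x}h(x,\partial_x u)$ for every $u$, and since $h+a$ is again Tonelli (a constant shift affects neither the positive definiteness of $\partial^2/\partial p^2$ nor the superlinearity in $p$), taking the infimum over $u$ pulls the constant out, giving $c(h+a)=c(h)+a$.

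For (1): given Tonelli Hamiltonians $h,h'$, set $\delta:=\|h-h'\|_{C^0}$ and assume $\delta<\infty$. Then $h-\delta\le h'\le h+\delta$ pointwise, and $h\pm\delta$ are again Tonelli, so by (3) followed by (2) we get $c(h)-\delta\le c(h')\le c(h)+\delta$; that is, $|c(h)-c(h')|\le\|h-h'\|_{C^0}$. Thus $c$ is $1$-Lipschitz for the $C^0$ norm on the set of Tonelli Hamiltonians, hence in particular continuous.

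The arguments are essentially formal, so the only delicate point — the closest thing to an obstacle — is bookkeeping about well-definedness: one must appeal to \cite[Theorem A]{CIPP} so that every $c(\cdot)$ appearing above is a finite number rather than $-\infty$, and, since $T^{\ast}M$ is noncompact, read the continuity assertion as continuity of $c$ on the space of Tonelli Hamiltonians equipped with the (possibly extended) $C^0$ metric, which is exactly what the Lipschitz bound provides.
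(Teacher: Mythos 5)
Your proof is correct and follows the same route the paper intends: Proposition \ref{cv} is stated there as a direct consequence of the formula \eqref{def-cv}, with (2) and (3) read off from the inf-sup expression and (1) obtained via the standard $1$-Lipschitz estimate $|c(h)-c(h')|\leq\|h-h'\|_{C^{0}}$ that you derive from them. Your remarks on finiteness of $c(\cdot)$ (via \cite[Theorem A]{CIPP}) and on interpreting the $C^{0}$ continuity through the possibly extended metric are exactly the right bookkeeping and match how the proposition is used in Proposition \ref{adv1}.
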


Now for a contact Hamiltonian $H$ and a given number $a\in\R$, set $h^{a}:T^{\ast}M\rightarrow\R$ by $h^{a}(x,p):=H(x,a,p)$. Assumptions (H1)-(H2) implies that for every $a\in\R$, $h^{a}$ is an autonomous classical Tonelli Hamiltonian and
\begin{definition}\label{adv}
The admissible value set $\mathcal{C}_H$ of $H$ is defined by
\begin{equation}
\mathcal{C}_H=\cup_{a\in\R}\hspace{0.1cm}c(h^{a}).
\end{equation}
\end{definition}

As a direct consequence of Definition \ref{adv}, we deduce some topological properties of the admissible value set $\mathcal{C}_{H}$.
\begin{proposition}\label{adv1}
Let $H$ be a contact Hamiltonian satisfying (H1)-(H3), then $\mathcal{C}_{H}\subseteq\R$ is an open interval. Moreover, if $\frac{\partial H}{\partial u}$ has a positive lower bound, then $\mathcal{C}_H=\R$.
\end{proposition}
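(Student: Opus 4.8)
The plan is to study the single real-valued function $\Psi:\R\to\R$ defined by $\Psi(a):=c(h^{a})$, whose image is precisely $\mathcal{C}_{H}$ by Definition \ref{adv}; I would show that $\Psi$ is continuous and \emph{strictly} increasing, after which the conclusion is immediate since the image of such a map defined on all of $\R$ is automatically an open interval. First, from $0<\partial H/\partial u\le\Lambda$ one has, pointwise on $T^{\ast}M$, $h^{a}\le h^{a'}\le h^{a}+\Lambda(a'-a)$ whenever $a\le a'$; applying parts (2) and (3) of Proposition \ref{cv} yields $0\le \Psi(a')-\Psi(a)\le\Lambda(a'-a)$, so $\Psi$ is $\Lambda$-Lipschitz (in particular continuous) and non-decreasing. (It is also nonempty, containing $\Psi(0)$, so we really get a nonempty open interval.)

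The substantive step is strict monotonicity, and this is exactly where $\partial H/\partial u>0$ is needed rather than merely $\ge 0$. Suppose for contradiction that $a<a'$ but $\Psi(a)=\Psi(a')=:c$. By \eqref{def-cv}, for each $\eps\in(0,1)$ pick $u_{\eps}\in C^{\infty}(M,\R)$ with $\sup_{x}h^{a'}(x,d_{x}u_{\eps})\le c+\eps$. The key observation is a uniform gradient bound: by superlinearity (H2) and compactness of $M$, the sublevel set $\{(x,p):h^{a'}(x,p)\le c+1\}$ is a compact subset of $T^{\ast}M$, hence there is $R>0$, independent of $\eps$, with $|d_{x}u_{\eps}(x)|_{x}\le R$ for all $x$. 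Set $\eta:=\min\{\tfrac{\partial H}{\partial u}(x,s,p):x\in M,\ s\in[a,a'],\ |p|_{x}\le R\}>0$, the minimum of a positive continuous function over a compact set. Then for every $x$,
\[
h^{a}(x,d_{x}u_{\eps})=h^{a'}(x,d_{x}u_{\eps})-\int_{a}^{a'}\frac{\partial H}{\partial u}(x,s,d_{x}u_{\eps})\,ds\le c+\eps-\eta(a'-a),
\]
so $\Psi(a)\le\sup_{x}h^{a}(x,d_{x}u_{\eps})\le c+\eps-\eta(a'-a)$; letting $\eps\to0$ gives $\Psi(a)\le c-\eta(a'-a)<c$, contradicting $\Psi(a)=c$. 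Hence $\Psi$ is strictly increasing.

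Combining the two steps, $\Psi:\R\to\R$ is continuous and strictly increasing, so $\mathcal{C}_{H}=\Psi(\R)$ equals the interval with endpoints $\lim_{a\to-\infty}\Psi(a)$ and $\lim_{a\to+\infty}\Psi(a)$ (allowing $\pm\infty$); since $\Psi$ is strictly increasing and defined on all of $\R$, for any $a$ we have $\Psi(a-1)<\Psi(a)<\Psi(a+1)$, so neither endpoint is attained, and $\mathcal{C}_{H}$ is open. For the final assertion, if $\partial H/\partial u\ge\lambda_{0}>0$ everywhere, then $h^{a'}\ge h^{a}+\lambda_{0}(a'-a)$ pointwise for $a<a'$, and Proposition \ref{cv}(2)(3) give $\Psi(a')-\Psi(a)\ge\lambda_{0}(a'-a)$; thus $\Psi(a)\ge\Psi(0)+\lambda_{0}a\to+\infty$ as $a\to+\infty$ and $\Psi(a)\le\Psi(0)+\lambda_{0}a\to-\infty$ as $a\to-\infty$, so $\mathcal{C}_{H}=\R$.

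The main obstacle is precisely the strict-monotonicity step: one cannot pass directly from the strict pointwise inequality $h^{a}<h^{a'}$ to $c(h^{a})<c(h^{a'})$, because $\partial H/\partial u$ need not be bounded below away from zero globally on $T^{\ast}M\times\R$ (and indeed Example \ref{ex} exploits exactly this). The remedy is the coercivity-driven a priori bound $R$ on the gradients of near-critical subsolutions, which confines the whole comparison to a compact region of $(x,u,p)$-space where $\partial H/\partial u$ does admit a positive lower bound; everything else reduces to the elementary monotonicity and translation properties in Proposition \ref{cv}.
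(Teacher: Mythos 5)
Your proof is correct and takes essentially the same route as the paper: non-decrease and continuity of $a\mapsto c(h^{a})$ from Proposition \ref{cv}, strict monotonicity obtained by confining the gradients of near-optimal smooth functions in \eqref{def-cv} to a compact region on which $\partial H/\partial u$ admits a positive lower bound, and the linear bounds $c(h^{a})\ge c(h^{0})+\lambda_{0}a$ for $a\ge0$ (resp.\ $c(h^{a})\le c(h^{0})+\lambda_{0}a$ for $a\le0$) for the final assertion. The only cosmetic differences are that you get $\Lambda$-Lipschitz continuity directly from (H3) instead of citing Proposition \ref{cv}(1), and you use the compact sublevel set $\{(x,p):h^{a'}(x,p)\le c+1\}\subset T^{\ast}M$ together with a gradient bound $R$ where the paper uses the compact set $K_{I,B}\subset T^{\ast}M\times\R$.
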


\begin{proof}
By definition \ref{adv}, $\mathcal{C}_H$ is a non-empty subset of $\R$. It follows from the assumption (H3) that the map $j:\R\rightarrow C^0(T^{\ast}M,\R); j(a):=h^a$ is continuous with respect to the $C^0$-norm on $C^0(T^{\ast}M,\R)$. Thus $\mathcal{C}_H=c\circ j(\R)$ is an interval by (1) of Proposition \ref{cv}. By the fact that $\frac{\partial H}{\partial u}\geq0$ and (3) of Proposition \ref{cv}, the map $c\circ j$ is also monotone increasing.

Assume $\frac{\partial H}{\partial u}>0$ everywhere, we show that $c\circ j:\R\rightarrow\R$ is strictly increasing, this implies $\mathcal{C}_H$ is open. Fix two real numbers $a<a^{\prime}$, let $I:=[a,a^{\prime}], B:=c\circ j(a^{\prime})+1$, by (H1)-(H3) the set
$$
K_{I,B}=\{(x,u,p)\in T^{\ast}M\times\R|u\in I,|H(x,u,p)|\leq B\}
$$
is compact. Let $\lambda=\inf_{K_{I,B}}\frac{\partial H}{\partial u}>0$, we have that for $0<\epsilon<\min\{1,\lambda(a^{\prime}-a)\}$, there exists $u_{a^{\prime}}\in C^{\infty}(M,\R)$ such that
\begin{equation*}
\sup_{x\in M} H(x,a^{\prime},d_{x}u_{a^{\prime}}(x))<c\circ j(a^{\prime})+\epsilon.
\end{equation*}
Thus by definition of $K_{I,B}$, for any $x\in M, u\in I$, $(x,u,d_{x}u_{a^{\prime}}(x))\in K_{I,B}$ and
\begin{equation*}
c\circ j(a)\leq\sup_{x\in M} H(x,a,d_{x}u_{a^{\prime}})\leq\sup_{x\in M} H(x,a^{\prime},d_{x}u_{a^{\prime}})-\lambda(a^{\prime}-a)<c\circ j(a^{\prime}).
\end{equation*}

Assume $\lambda=\inf_{T^{\ast}M\times\R}\frac{\partial H}{\partial u}>0$, we show that $c(h^{a})\rightarrow\pm\infty$ as $a\rightarrow\pm\infty$ respectively. This fact and the monotone increasing property of $c\circ j$ implies $\mathcal{C}_H=\R$. By the assumption we have
\begin{itemize}
  \item for $a\geq 0$, $h^a\geq h^0+\lambda a$,
  \item for $a<0$, $h^a\leq h^0+\lambda a$.
\end{itemize}

According to (ii) and (iii) of Proposition \ref{cv}, we have
\begin{itemize}
  \item $c(h^{a})\geq c(h^{0}+\lambda a)=c(h^{0})+\lambda a\rightarrow +\infty$ as $a\rightarrow+\infty$,
  \item $c(h^{a})\leq c(h^{0}+\lambda a)=c(h^{0})+\lambda a\rightarrow -\infty$ as $a\rightarrow-\infty$.
\end{itemize}
This completes the proof.
\end{proof}

\section{Uniqueness of the stationary solution}
This section is devoted to a dynamical proof of the uniqueness of the solution of \eqref{sta} under the assumption (H1)-(H3) and $0\in\mathcal{C}_{H}$. See \cite{Ba3} for a proof from PDE aspects.

\begin{proof}
By Proposition \ref{prre-2}, stationary solutions are fixed points of $T_{t}$, thus it suffices to show that for any $t>0$ and any two distinct $\varphi,\psi\in C^{0}(M,\R)$,
\begin{equation}\label{A-1}
\|T_t\varphi-T_t\psi\|_{C^{0}}<\|\varphi-\psi\|_{C^{0}}.
\end{equation}
Set $a=\|\varphi-\psi\|_{C^{0}}$, \eqref{A-1} is equivalent to
\begin{equation}
T_t\psi-a<T_t\varphi<T_t\psi+a.
\end{equation}

Set $\psi_{\pm}=\psi\pm a$, it is clear that
\begin{equation}\label{A-2}
\begin{split}
\psi_{-}\leq\varphi\leq\psi_{+},\\
\psi_{-}<\psi<\psi_{+}.
\end{split}
\end{equation}
By Proposition \ref{prre-1} (i), we have
\begin{equation}\label{A-3}
\begin{split}
T_{t}\psi_{-}\leq T_{t}\varphi\leq T_{t}\psi_{+},\\
T_{t}\psi_{-}\leq T_{t}\psi\leq T_{t}\psi_{+}.
\end{split}
\end{equation}

By \eqref{A-2} and the continuity of $T_{t}$ with respect to $t$, there exists $\delta>0$ such that for $0\leq t\leq\delta$,
\begin{equation}\label{A-4}
T_{t}\psi_{-}<T_{t}\psi<T_{t}\psi_{+}.
\end{equation}
We choose a minimizer $\gamma_{x,t}$ of $T_{t}\psi(x)$, then \eqref{A-3}, \eqref{A-4} and the assumption (L3) implies
\begin{equation}\label{A-5}
\begin{split}
\int_0^\delta L(\gm_{x,t}(\tau),T_\tau\psi_{+}(\gm_{x,t}(\tau)),\dot{\gm}_{x,t}(\tau))d\tau<\int_0^\delta L(\gm_{x,t}(\tau),T_\tau\psi(\gm_{x,t}(\tau)),\dot{\gm}_{x,t}(\tau))d\tau\\
\int_\delta^{t}L(\gm_{x,t}(\tau),T_\tau\psi_{+}(\gm_{x,t}(\tau)),\dot{\gm}_{x,t}(\tau))d\tau\leq\int_\delta^{t} L(\gm_{x,t}(\tau),T_\tau\psi(\gm_{x,t}(\tau)),\dot{\gm}_{x,t}(\tau))d\tau.
\end{split}
\end{equation}

Inequalities \eqref{A-5} lead to
\begin{align*}
&T_{t}\psi_{+}(x)\leq\psi_{+}(\gm_{x,t}(0))+\int_0^tL(\gm_{x,t}(\tau),T_\tau\psi_{+}(\gm_{x,t}(\tau)),\dot{\gm}_{x,t}(\tau))d\tau\\
<&\psi(\gm_{x,t}(0))+a+\int_0^tL(\gm_{x,t}(\tau),T_\tau\psi(\gm_{x,t}(\tau)),\dot{\gm}_{x,t}(\tau))d\tau=T_{t}\psi(x)+a,
\end{align*}
where the first and last inequalities use Definition \ref{sg}. Combining \eqref{A-3}, we have $T_{t}\varphi\leq T_{t}\psi_{+}<T_{t}\psi+a$. Similarly, $T_{t}\psi-a<T_{t}\psi_{-}\leq T_{t}\varphi$ and this completes the proof.
\end{proof}


\section{Convergence of discounted solutions}
In terms of (\ref{hbaa}), we consider the discounted Hamiltonian $H(x,u,p)$ formed as $H(x,u,p)=\lambda u+h(x,p)$, where $h(x,p):=-\lambda u_{-}(x)+H(x,u_{-}(x),p)$ and $\lambda$ is a positive real number. From the assumptions (H1)-(H3) on $H$, it follows that $h$ is an autonomous classical Tonelli Hamiltonian. We can refer \cite{dfiz,IMT1,IMT2} for the detailed analysis of this kind Hamiltonians.

 Note that $u_-$ is the  viscosity solution of  \eqref{sta} with the discounted Hamiltonian $H(x,u,p)=\lambda u+h(x,p)$. Moreover, there holds
\begin{proposition}\label{dvsr}
Let $H(x,u,p)=\lambda u+h(x,p)$ be the discounted Hamiltonian and $u(t,x),u_{-}(x)$ be the solutions of the equations \eqref{evo} and \eqref{sta} respectively, then
\begin{equation}\label{dvs}
\begin{split}
u(t,x)&=\inf_{\gm(0)=x}\left\{e^{-\lambda t}\varphi({\gm}(-t))+\int^0_{-t} e^{\lambda\tau}l({\gm}(\tau),\dot{{\gm}}(\tau))d\tau\right\},\\
u_{-}(x)&=\inf_{\gm(0)=x}\left\{\int^0_{-\infty} e^{\lambda\tau}l({\gm}(\tau),\dot{{\gm}}(\tau))d\tau\right\}.
\end{split}
\end{equation}
where
$$
l(x,\dot{x})=\sup_{p\in T^{*}_{x}M}\{\langle p,\dot{x}\rangle-h(x,p)\}
$$
is the convex dual of $h$ and both infimums in the above formula are taken among absolutely continuous curves.
\end{proposition}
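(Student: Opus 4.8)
The plan is to recognize that for the discounted Hamiltonian $H(x,u,p)=\lambda u+h(x,p)$ the implicit variational formula \eqref{uxt} becomes an \emph{explicit} one: the $u$-dependence of $L$ is linear, so $L(x,u,\dot x)=-\lambda u+l(x,\dot x)$ with $l$ the convex dual of $h$, and \eqref{uxt} reads $T_t\varphi(x)=\inf_{\gm(t)=x}\{\varphi(\gm(0))+\int_0^t(-\lambda\,T_\tau\varphi(\gm(\tau))+l(\gm(\tau),\dot\gm(\tau)))\,d\tau\}$. First I would fix a $C^1$ curve $\gm$ with $\gm(t)=x$ and, along it, set $w(\tau):=T_\tau\varphi(\gm(\tau))$; using the fact that a minimizer of $T_t\varphi(x)$ restricted to a subinterval is a minimizer for the shorter-time problem (the dynamic programming / semigroup property, Proposition \ref{prre-1}(iv)), the map $\tau\mapsto w(\tau)$ satisfies $\dot w(\tau)=-\lambda w(\tau)+l(\gm(\tau),\dot\gm(\tau))$ along any minimizing curve, while along a general curve one only gets the corresponding inequality. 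Solving this linear ODE with the integrating factor $e^{\lambda\tau}$ gives $w(t)=e^{-\lambda t}w(0)+\int_0^t e^{-\lambda(t-\tau)}l(\gm(\tau),\dot\gm(\tau))\,d\tau$, i.e. $T_t\varphi(x)=e^{-\lambda t}\varphi(\gm(0))+\int_0^t e^{-\lambda(t-\tau)}l(\gm(\tau),\dot\gm(\tau))\,d\tau$ for minimizers, with ``$\le$'' for all admissible curves; taking the infimum over $\gm$ and then reparametrizing $\tau\mapsto\tau-t$ so that the curve is defined on $[-t,0]$ with $\gm(0)=x$ yields the first line of \eqref{dvs}.

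Next I would pass to the stationary formula. Since $u_-$ is the viscosity solution of \eqref{sta} it is a fixed point of $T_t$ (Proposition \ref{prre-2}(ii)), hence $u_-=T_t u_-$ for every $t$, and applying the just-proved discounted representation to $\varphi=u_-$ gives $u_-(x)=e^{-\lambda t}u_-(\gm(-t))+\int_{-t}^0 e^{\lambda\tau}l(\gm(\tau),\dot\gm(\tau))\,d\tau$ along a calibrated curve $\gm_{x,-}:(-\infty,0]\to M$ of the kind furnished by Proposition \ref{prre-2}(ii)(2), and the inequality ``$\le$'' along any curve. Letting $t\to\infty$: the boundary term $e^{-\lambda t}u_-(\gm(-t))\to 0$ because $u_-$ is bounded on the compact $M$, and the integral $\int_{-t}^0 e^{\lambda\tau}l(\gm(\tau),\dot\gm(\tau))\,d\tau$ converges. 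Convergence of the integral and legitimacy of the limit require that along the relevant curves $\dot\gm$ stays bounded, so that $l(\gm(\tau),\dot\gm(\tau))$ is bounded and $e^{\lambda\tau}l$ is integrable on $(-\infty,0]$; this boundedness is exactly the content of the a priori estimates behind Lemma \ref{com-c} / \eqref{cps}, namely that the backward calibrated curves stay in the compact set $\mathcal{L}K_0$. This gives $u_-(x)=\inf_{\gm(0)=x}\int_{-\infty}^0 e^{\lambda\tau}l(\gm(\tau),\dot\gm(\tau))\,d\tau$, the second line of \eqref{dvs}, once one checks that the infimum is attained by the calibrated curve and that no curve with $\liminf$ of the tail integral smaller can exist (otherwise truncating it and comparing with $T_t u_-(x)=u_-(x)$ would give a contradiction).

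The step I expect to be the main obstacle is the passage to the limit $t\to\infty$ in the stationary formula, specifically controlling minimizing sequences of infinite-horizon curves: one must rule out that a sequence of curves with ever-decreasing truncated cost escapes to infinity in velocity, which is where superlinearity (L2) together with the uniform compactness estimate on backward calibrated curves (the reasoning of Lemma \ref{com-c}) must be invoked; establishing that the infimum over $(-\infty,0]$ curves is actually realized, and equals $\lim_t T_t u_-(x)$, is the delicate point, whereas the finite-horizon identity is a routine consequence of the linear ODE for $w$ and the dynamic programming principle. I would also take care, in the finite-horizon step, to justify differentiating $\tau\mapsto T_\tau\varphi(\gm(\tau))$ along minimizers; this follows from Proposition \ref{prre-2}(i), which says minimizers are $C^r$ and the associated jet is a $C^{r-1}$ orbit of \eqref{hjech}, so $\dot w$ exists and equals $-\lambda w+l$ classically.
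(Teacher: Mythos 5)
Your argument is correct in substance, but it takes a different route from the paper for the simple reason that the paper gives no proof of Proposition \ref{dvsr} at all: it treats the representation formulas \eqref{dvs} as classical facts about discounted Hamilton--Jacobi equations and refers to the literature (Davini--Fathi--Iturriaga--Zavidovique, Ishii--Mitake--Tran). What you do instead is derive \eqref{dvs} internally from the implicit variational principle \eqref{uxt}: since $L(x,u,\dot{x})=-\lambda u+l(x,\dot{x})$ is affine in $u$, the implicit formula linearizes along curves, and the integrating factor $e^{\lambda\tau}$ turns it into the explicit discounted formula; the stationary formula then follows from $u_-=T_tu_-$, the calibrated curves of Proposition \ref{prre-2}(ii)(2), and letting $t\to\infty$. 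This is a legitimate and self-contained alternative, and it has the advantage of staying entirely within the semigroup framework the paper already set up, whereas the citation route imports uniqueness/representation results for discounted equations proved elsewhere.

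Two points should be tightened. First, along an arbitrary (non-minimizing) curve the function $w(\tau)=T_\tau\varphi(\gm(\tau))$ need not be differentiable, so the pointwise inequality $\dot{w}\leq-\lambda w+l$ is not literally available; you should instead work with the integral inequality
\begin{equation*}
T_{s}\varphi(\gm(s))\leq T_{s'}\varphi(\gm(s'))+\int_{s'}^{s}\bigl(-\lambda T_\tau\varphi(\gm(\tau))+l(\gm(\tau),\dot{\gm}(\tau))\bigr)d\tau,\qquad s'\leq s,
\end{equation*}
which follows from the definition of the infimum in \eqref{uxt} together with the semigroup property, and then apply a Gronwall-type argument in integral form; along minimizers the corresponding equality holds (dynamic programming plus Proposition \ref{prre-2}(i)) and gives the exact solution of the linear equation, as you say. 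Second, the step you flag as the main obstacle is in fact easier than you fear: no compactness or velocity bound on minimizing sequences (and hence no appeal to Lemma \ref{com-c}) is needed for the passage $t\to\infty$. Since $M$ is compact and $l$ is superlinear, $l$ is bounded below, so $e^{\lambda\tau}l(\gm(\tau),\dot{\gm}(\tau))$ is bounded below by an integrable function on $(-\infty,0]$ and the truncated integrals converge (possibly to $+\infty$) for every admissible curve; the boundary term $e^{-\lambda t}u_-(\gm(-t))$ vanishes because $u_-$ is bounded. This yields $u_-(x)\leq\inf_\gm\int_{-\infty}^{0}e^{\lambda\tau}l\,d\tau$ over all curves, while equality (and attainment) comes from the calibrated curve, exactly as in your lower-bound step. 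Finally, a cosmetic remark: \eqref{uxt} uses piecewise $C^1$ curves while \eqref{dvs} is over absolutely continuous ones; the two infima coincide by the usual approximation and lower semicontinuity of the action, which is worth a sentence.
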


An immediate corollary of the above representation is
\begin{lemma}\label{dsep}
Let $u(t,x),u_{-}(x)$ be the  solutions of the equations \eqref{evo} and \eqref{sta} with the discounted Hamiltonian $H(x,u,p)=\lambda u+h(x,p)$ respectively, then for $t\geq0$,
\begin{equation}\label{epd}
|u(t,x)-u_{-}(x)|\leq O(e^{-\lambda t}).
\end{equation}
\end{lemma}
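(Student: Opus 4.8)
The plan is to read the estimate off directly from the two variational representations in Proposition~\ref{dvsr}, using nothing beyond the exponential weight $e^{\lambda\tau}$ and a change of the time variable. The single computational fact that drives everything is the following: for any absolutely continuous curve $\gamma\colon(-\infty,0]\to M$ and any $t\ge 0$, the substitution $\sigma=\tau+t$ gives
\begin{equation*}
\int_{-\infty}^{-t}e^{\lambda\tau}l(\gamma(\tau),\dot\gamma(\tau))\,d\tau
=e^{-\lambda t}\int_{-\infty}^{0}e^{\lambda\sigma}l(\gamma(\sigma-t),\dot\gamma(\sigma-t))\,d\sigma
\ge e^{-\lambda t}\,u_{-}(\gamma(-t)),
\end{equation*}
because $\sigma\mapsto\gamma(\sigma-t)$ is an admissible curve ending at $\gamma(-t)$ for the second formula in \eqref{dvs}; this inequality becomes an equality up to an arbitrarily small error when $\gamma$ is a near-minimizer of $u_{-}(\gamma(0))$. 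All the actions that occur below are finite, since $l(x,v)\ge-\max_{y\in M}h(y,0)$ by convex duality, so the tail integrals converge.

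First I would bound $u(t,x)-u_{-}(x)$ from above. Given $\eps>0$, choose an $\eps$-minimizer $\gamma_{-}\colon(-\infty,0]\to M$ with $\gamma_{-}(0)=x$ for $u_{-}(x)$, and use $\gamma_{-}|_{[-t,0]}$ as a competitor in the first formula of \eqref{dvs}; then use $u_{-}(x)\ge\int_{-\infty}^{0}e^{\lambda\tau}l(\gamma_{-},\dot\gamma_{-})\,d\tau-\eps$ to cancel the contribution over $[-t,0]$. The displayed bound gives
\begin{equation*}
u(t,x)-u_{-}(x)\le e^{-\lambda t}\bigl(\varphi(\gamma_{-}(-t))-u_{-}(\gamma_{-}(-t))\bigr)+\eps\le\bigl(\|\varphi\|_{C^{0}}+\|u_{-}\|_{C^{0}}\bigr)e^{-\lambda t}+\eps,
\end{equation*}
and letting $\eps\to0$ settles this direction uniformly in $x$.

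For the reverse inequality I would run the same bookkeeping backwards. Given $\eps>0$, pick an $\eps$-minimizer $\gamma_{t}\colon[-t,0]\to M$ with $\gamma_{t}(0)=x$ for $u(t,x)$ in the first formula of \eqref{dvs}, and prepend to it an $\eps$-minimizing curve for $u_{-}(\gamma_{t}(-t))$, reparametrized onto $(-\infty,-t]$ so that the two pieces agree at $\tau=-t$. The concatenation is admissible for $u_{-}(x)$; the contributions over $[-t,0]$ match, the tail is bounded above by $e^{-\lambda t}\bigl(u_{-}(\gamma_{t}(-t))+\eps\bigr)$ through the same time-shift, and one gets $u_{-}(x)-u(t,x)\le\bigl(\|u_{-}\|_{C^{0}}+\|\varphi\|_{C^{0}}\bigr)e^{-\lambda t}+2\eps$. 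Letting $\eps\to0$ and combining with the first estimate gives $|u(t,x)-u_{-}(x)|\le\bigl(\|\varphi\|_{C^{0}}+\|u_{-}\|_{C^{0}}\bigr)e^{-\lambda t}$, which is \eqref{epd}.

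I do not expect a genuine obstacle here. The only points needing care are that the concatenated curve is actually absolutely continuous — immediate, since each piece is and they agree at one point — and that the tail integrals are finite, which follows from the lower bound on $l$ noted above. The content lies entirely in the variational representation of Proposition~\ref{dvsr}: once it is in hand, the exponential rate is forced by the weight $e^{\lambda\tau}$ alone, with no dynamical condition on $h$. This is exactly the mechanism that, after the reduction performed in Steps~I and~II, renders the convergence in Theorem~\ref{main} exponential.
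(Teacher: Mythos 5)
Your argument is correct, and it rests on the same foundation as the paper's proof: both directions are obtained by comparing the two representation formulas of Proposition~\ref{dvsr}, using a (near-)optimal curve for one formula as a competitor in the other, with the discount weight $e^{\lambda\tau}$ producing the rate. The difference is in how the leftover tail over $(-\infty,-t]$ is handled. The paper controls it crudely: in one direction it bounds the tail below by $\min_{TM} l\cdot\int_{-\infty}^{-t}e^{\lambda\tau}d\tau$, and in the other it extends the finite-time minimizer by the \emph{constant} curve at $\gamma_{x,t}(-t)$, paying $e^{-\lambda t}\|l(\cdot,0)\|_{C^0}/\lambda$. You instead exploit the self-similarity of the discounted action under the time shift $\sigma=\tau+t$, which identifies the tail with $e^{-\lambda t}$ times a full discounted action based at $\gamma(-t)$, hence bounded by $e^{-\lambda t}u_-(\gamma(-t))$ (up to $\eps$), and in the reverse direction you concatenate with a near-minimizer of $u_-(\gamma_t(-t))$ rather than a constant curve; working with $\eps$-minimizers also lets you avoid invoking existence of exact minimizers for the infinite-horizon problem. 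Both routes give the same exponential rate; yours yields the cleaner constant $\|\varphi\|_{C^0}+\|u_-\|_{C^0}$ and uses only the lower bound $l(x,\dot x)\ge -\max_M h(\cdot,0)$ for integrability, while the paper's constants involve $\min_{TM}l$ and $\|l(\cdot,0)\|_{C^0}$. No gap to flag; just make sure the concatenated curve is understood as locally absolutely continuous on $(-\infty,0]$, which is all the representation formula requires.
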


\begin{proof}
By Proposition \eqref{dvsr}, there is an absolutely continuous curve $\gamma_{x,-}:(-\infty,0]\rightarrow M$ with $\gamma_{x,-}(0)=x$ that achieves the infimum in the formula \eqref{dvs} for $u_{-}$, then for $t\geq0$,
\begin{align*}
&u(t,x)-u_{-}(x)\\
\leq&e^{-\lambda t}\varphi({\gm}_{x,-}(-t))-\int^{-t}_{-\infty} e^{\lambda\tau}l({\gm}_{x,-}(\tau),\dot{{\gm}}_{x,-}(\tau))d\tau\\
\leq&e^{-\lambda t}\varphi({\gm}_{x,-}(-t))-\min_{TM}l(x,\dot{x})\int^{-t}_{-\infty} e^{\lambda\tau}d\tau\\
\leq&e^{-\lambda t}\bigg(\|\varphi\|_{C^0}-\frac{\min_{TM}l(x,\dot{x})}{\lambda}\bigg)\\
\leq &e^{-\lambda t}\bigg(\|\varphi\|_{C^0}+\frac{|\min_{TM}l(x,\dot{x})|}{\lambda}\bigg).
\end{align*}

By Proposition \eqref{dvsr}, there is an absolutely continuous curve $\gamma_{x,t}:[-t,0]\rightarrow M$ with $\gamma_{x,t}(0)=x$ that achieves the infimum in the formula \eqref{dvs} for $u(t,x)$, define $\xi:(-\infty,0]\rightarrow M$ by $\xi(\tau)=\gamma_{x,t}(\tau),\,\,\,\tau\in[-t,0]\,$ and $\xi(\tau)\equiv\gamma_{x,t}(-t),\,\,\,\tau\leq-t$, it follows that  $\xi$ is an absolutely continuous curve with $\xi(0)=x$ and
\begin{align*}
&u_{-}(x)-u(t,x)\\
\leq&e^{-\lambda t}|\varphi(\xi(-t))|+\int^{-t}_{-\infty}e^{\lambda\tau}l(\xi(\tau),\dot{\xi}(\tau))d\tau\\
\leq&e^{-\lambda t}|\varphi({\gm}_{x,t}(-t))|+e^{-\lambda t}\frac{l({\gm}_{x,t}(-t),0)}{\lambda}\\
\leq&e^{-\lambda t}\bigg(\|\varphi\|_{C^0}+\frac{\|l(\cdot,0)\|_{C^{0}}}{\lambda}\bigg).
\end{align*}
This completes the proof.
\end{proof}

\textbf{Acknowledgement}
Both authors would like to thank Prof. Jun Yan for his deep insight on this topic. L. Wang was partially under the support of National Natural Science Foundation of China (Grant 11631006), L. Jin was supported by the National Natural Science Foundation of China (Grants 11571166).

\medskip


\begin{thebibliography}{99}\small


\small

\bibitem{Ana} M-C. Arnaud {\it Convergence of the semi-group of Lax-Oleinik:
a geometric point of view}.
Nonlinearity 18 (2005) 1835-1840.

\bibitem{Ar1} V. Arnold, {\em Mathematical methods of classical mechanics}, Translated from the 1974 Russian original by K. Vogtmann and A. Weinstein. Corrected reprint of the second (1989) edition. Graduate Texts in Mathematics, 60. Springer-Verlag, New York.

\bibitem{Ar} V. Arnold. {\it Lectures on partial differential equations}. Translated from the second Russian edition by Roger Cooke. Universitext. Springer-Verlag, Berlin; Publishing House PHASIS, Moscow, 2004.




\bibitem{Ba3} G. Barles, {\it Solutions de viscosit\'{e} des \'{e}quations de Hamilton-Jacobi}. (French) [Viscosity solutions of Hamilton-Jacobi equations] Math¨¦matiques \& Applications (Berlin) [Mathematics \& Applications], 17. Springer-Verlag, Paris, 1994. x+194 pp.



\bibitem{BCT1} A. Bravetti, H. Cruz, D. Tapias.
{\it Contact Hamiltonian mechanics}. Annals of Physics \textbf{376} (2017), 17-39.

\bibitem{CS}   P. Cannarsa and C. Sinestrari. {\it Semiconcave functions, Hamilton-Jacobi equations, and optimal control}. Vol. \textbf{58}. Springer, 2004.


\bibitem{CIPP} G. Contreras, R. Iturriaga, G. P. Paternain and M. Paternain.
{\it Lagrangian graphs, minimizing measures and Ma\~{n}\'{e}'s
critical values}. Geom. Funct. Anal. \textbf{8} (1998), 788-809.


\bibitem{dfiz}A. Davini, A. Fathi, R. Iturriaga and M. Zavidovique. {\it Convergence of the solutions of the discounted
Hamilton-Jacobi equation}. Invent. math. \textbf{105}(8):1-27, 2016.



\bibitem{ds}A. Davini and A. Siconolfi. {\it A generalized dynamical approach to the large time behavior of  solutions of Hamilton-
Jacobi equations}. SIAM J. Math. Anal., \textbf{38}(2):478-502, 2006.



\bibitem{dC}M. do Carmo. {\it Riemannian geometry}. Translated from the second Portuguese edition by Francis Flaherty. Mathematics: Theory and Applications. Birkhauser Boston, Inc., Boston, MA, 1992. xiv+300 pp.



\bibitem{F22} A. Fathi. {\it Sur la convergence du semi-groupe de Lax-Oleinik}.
C. R. Acad. Sci. Paris S\'{e}r. I Math., \textbf{327}(3):267-270, 1998.


\bibitem{F3} A. Fathi. {\it Weak KAM Theorem in Lagrangian Dynamics}. Preliminary Version Number 10, 2008.


\bibitem{FaM} A. Fathi and J. N. Mather. {\it Failure of convergence of the
Lax-Oleinik semi-group in the time- periodic case}. Bull. Soc. Math.
France. \textbf{128} (2000), 473-483.



\bibitem{II} N.Ichihara and Hitoshi Ishii. {\it Long-time behavior  of solutions of Hamilton-Jacobi equations with convex and coercive Hamiltonians}. Arch. Ration. Mech. Anal., \textbf{194}:383-419, 2009.

\bibitem{IMT1} H. Ishii, H. Mitake, H. Tran. {\it The vanishing discount problem and viscosity Mather measures. Part 1: The problem on a torus}. J. Math. Pures Appl. (9) 108 (2017), 125-149.

\bibitem{IMT2} H. Ishii, H. Mitake, H. Tran. {\it The vanishing discount problem and viscosity Mather measures. Part 2: Boundary value problems}. J. Math. Pures Appl. (9) 108 (2017), 261-305.



\bibitem{Li} X. Li. {\it Long-time asymptotic solutions of convex Hamilton-Jacobi equations depending on unknown functions}. Discrete and Continuous Dynamical Systems-A,  \textbf{37}(10): 5151-5162, 2017.


%
%





\bibitem{NR} G. Namah and J.-M. Roquejoffre. {\it Remarks on the long time behavior of the solutions of Hamilton-Jacobi equations}. Commun. Partial Differ. Equ., \textbf{24}:883-893, 1999.



\bibitem{SWY} X. Su, L. Wang and J. Yan. {\it Weak KAM theory for Hamilton-Jacobi equations  depending on unkown functions}. Discrete and Continuous Dynamical Systems-A,  \textbf{36}(11): 6487-6522, 2016.


\bibitem{WY10} K. Wang and J. Yan. {\it The rate of convergence of the Lax-Oleinik semigroup-degenerate fixed point case}. J. Sci. China Math., \textbf{54}(3):545-554, 2011.


\bibitem{WY} K. Wang and J. Yan. {\it A new kind of Lax-Oleinik type operator with parameters
for time-periodic positive definite Lagrangian systems}. Comm. Math.
Phys., \textbf{309}(3):663-691, 2012.


\bibitem{WY11} K. Wang and J. Yan. {\it The rate of convergence of new Lax-Oleinik type operators for time-periodic positive definite Lagrangian systems}. Nonlinearity, \textbf{25}(7):2039-2057, 2012.



\bibitem{WWY} K. Wang, L. Wang and J. Yan. {\it Implicit variational principle for contact Hamiltonian systems}.  Nonlinearity 30 (2017), 492-515.

\bibitem{WWY1} K. Wang, L. Wang and J. Yan, {\it Variational principle for contact Hamiltonian systems and its applications}, Journal Mathematiques Pures Appliquees, to appear.




\end{thebibliography}
\end{document}